\numberwithin{equation}{section}
\def\XXint#1#2#3{{\setbox0=\hbox{$#1{#2#3}{\int}$}
\vcenter{\hbox{$#2#3$}}\kern-.5\wd0}}
\newcommand{\tr}[2]{\textrm{tr}_{#1} \, {#2}}
\newcommand{\ve}{\varepsilon}
\newcommand{\ocon}{\omega_{\textrm{con}}}
\newcommand{\dbar}{\overline{\partial}}
\newcommand{\ddt}[1]{\frac{\partial #1}{\partial t}}
\newcommand{\ov}[1]{\overline{#1}}
\newcommand{\Xc}{X_{\textrm{can}}}
\newcommand{\ddbar}{\frac{\sqrt{-1}}{2\pi} \partial\dbar}
\newcommand{\oeuc}{\omega_{\textrm{Eucl}}}
\numberwithin{equation}{section}
\newcommand{\oke}{\omega_{\textrm{KE}}}
\begin{document}
\newcounter{remark}
\newcounter{theor}
\setcounter{remark}{0} \setcounter{theor}{1}
\newtheorem{claim}{Claim}
\newtheorem{theorem}{Theorem}[section]
\newtheorem{proposition}{Proposition}[section]
\newtheorem{lemma}{Lemma}[section]
\newtheorem{definition}{Definition}[section]
\newtheorem{conjecture}{Conjecture}[section]
\newtheorem{corollary}{Corollary}[section]
\newenvironment{remark}[1][Remark]{ \begin{trivlist}
\item[\hskip \labelsep {\bfseries #1
\thesection.\theremark}]}{\end{trivlist}}
\newenvironment{example}[1][Example]{\addtocounter{remark}{1} \begin{trivlist}
\item[\hskip \labelsep {\bfseries #1
\thesection.\theremark}]}{\end{trivlist}}
~

\title[The K\"ahler-Ricci flow on surfaces of general type]{Geometric convergence of \\ the K\"ahler-Ricci flow  on\\ complex surfaces of general type}

\author[B. Guo]{Bin Guo}
\thanks{Research supported in part by NSF grants DMS-1406124 and DMS-1406164}
\address{Department of Mathematics, Rutgers University, Piscataway, NJ 08854}
\author[J. Song]{Jian Song}
\address{Department of Mathematics, Rutgers University, Piscataway, NJ 08854}
\author[B. Weinkove]{Ben Weinkove}
\address{Department of Mathematics, Northwestern University, Evanston, IL 60208}

\begin{abstract}   We show that on smooth minimal surfaces of general type, the K\"ahler-Ricci flow starting at any initial K\"ahler metric converges in the Gromov-Hausdorff sense to a K\"ahler-Einstein orbifold surface. In particular, the diameter of the evolving metrics is uniformly bounded for all time and the K\"ahler-Ricci flow contracts all the holomorphic spheres with $(-2)$ self-intersection number to isolated orbifold points.  Our estimates do not require a priori the existence of an orbifold K\"ahler-Einstein metric on the canonical model. 

 \end{abstract}

\maketitle

\section{Introduction}

The Ricci flow, introduced by Hamilton \cite{H}, has become a powerful tool to study the topology and geometric structures of  Riemannian manifolds. In general, the Ricci flow develops finite time singularities. Hamilton's program of \emph{Ricci flow with surgeries} was carried out by Perelman \cite{P1, P2, P3} to prove Thurston's geometrization conjecture. The minimal model theory in birational geometry can be viewed as the complex analogue of Thurston's geometrization conjecture.  It was proposed in \cite{SoT3} that the Ricci flow will carry out an \emph{analytic minimal model program} on projective varieties, analogous to the 3-dimensional Ricci flow with surgeries.  

In particular, it was 
 conjectured in \cite{SoT3} that the Ricci flow will perform a canonical surgery, corresponding to a birational transformation such as a divisorial contraction or flip, whenever it encounters a non-volume-collapsing singularity at finite time (see also \cite{T}).  This extended earlier conjectures of \cite{FIK}, which were established in \cite{SW1, CT}.
 The theory of the weak K\"ahler-Ricci flow was developed in \cite{SoT3}, where it was shown that the flow can be continued through these birational transformations at the level of potentials.  These surgeries are conjectured \cite{SoT3} to be continuous in the Gromov-Hausdorff topology, and this was confirmed for K\"ahler surfaces \cite{SW2, SW3, SW4}.  Indeed it was shown  that if the total volume is uniformly bounded below away from zero at a finite time singularity, then the K\"ahler-Ricci flow converges smoothly outside finitely many distinct holomorphic spheres with  self-intersection number $(-1)$,  and  these $(-1)$ curves are contracted to distinct points in the Gromov-Hausdorff sense. Furthermore,  making use of the work of \cite{SoT3}, it was shown that 
the flow can be extended on the blown-down surfaces, continuously in the Gromov-Hausdorff topology, and uniquely at the level of potentials.
These geometric blow-down procedures terminate in finite time. Higher dimensional metric surgeries via the K\"ahler-Ricci flow are constructed for certain families of projective manifolds in \cite{SY, S} using a construction of birational cobordism in \cite{W}.

Geometric convergence of the K\"ahler-Ricci flow at infinite time is still largely open.
 It is conjectured in \cite{SoT3} that if the K\"ahler-Ricci flow exists for all time, the normalized solution will either converge to a K\"ahler-Einstein metric space  with possible singularities (see \cite{EGZ}) or to a twisted K\"ahler-Einstein metric space of lower dimension, in the Gromov-Hausdorff topology. In particular, such spaces are expected to be homeomorphic to the canonical models of the original manifolds. This is known to hold if the manifold $X$ admits a K\"ahler-Einstein metric  \cite{C} or if there is a holomorphic submersion $\pi: X \rightarrow B$ with $c_1(B)<0$ and  smooth Calabi-Yau fibers  (established in \cite{TWY}, building on earlier work of 
 \cite{SoT1, SW3, GTZ, FZ, G}).  For more general K\"ahler manifolds for which the flow exists for all time, weak analytic convergence is proved in \cite{Ts, TZo} when $X$ is of general type and in \cite{SoT1, SoT2} when the flow collapses at infinity.  In both cases, the scalar curvature of the normalized solutions of the K\"ahler-Ricci flow  is always uniformly bounded \cite{Z2, SoT4}. However, the problem of  global geometric convergence is still open. One of the major obstacles is to obtain a uniform diameter bound for the  normalized K\"ahler-Ricci flow.

In this paper, we will consider minimal surfaces of general type. By definition, a minimal surface of general type is a smooth complex surface $X$ whose canonical bundle $K_X$ is big and nef. Equivalently, there exists a smooth closed $(1,1)$-form $\chi$ in the first Chern class $c_1(X)$ such that $\chi\geq 0$ and $\int_X \chi^2>0$. In particular, $X$ is projective and so it is K\"ahler. Since there are no $(-1)$ curves on such surfaces, the K\"ahler-Ricci flow has long time existence and it is proved in \cite{TZo} that the normalized solution will converge weakly to the pullback of the unique orbifold K\"ahler-Einstein metric on the canonical model of $X$. More precisely, we will consider the pluricanonical system of $X$.  For sufficiently large $m$, the holomorphic sections of $K_X^m$ induce a holomorphic map 
$\Phi: X \rightarrow \mathbb{P}^N$ with image $X_{\textrm{can}}$, the canonical model of $X$.  $X_{\textrm{can}}$ is an algebraic surface with, at worst, finitely many orbifold $A$-$D$-$E$-singularities (cf. \cite{BHPV}).  Moreover, $X_{\textrm{can}}$ admits a unique orbifold K\"ahler-Einstein metric \cite{Kob} because $K_{\Xc}$ is ample, which we will denote by $\oke$. The map $\Phi$ contracts $(-2)$ curves on $X$ to orbifold points on $X_{\textrm{can}}$.

We consider a solution $\omega(t)$ to  the normalized K\"ahler-Ricci flow on $X$ starting at any K\"ahler metric $\omega_0$.  Namely, $\omega=\omega(t)$ solves
\begin{equation}\label{eqn:KRF}\ddt{} \omega = - \textrm{Ric}(\omega) - \omega, \quad \omega|_{t=0}= \omega_0.\end{equation}
A unique  solution to (\ref{eqn:KRF}) exists for all time \cite{C, Ts}.   It is known by the work of Tsuji \cite{Ts} and Tian-Zhang \cite{TZo} that as $t\rightarrow \infty$, the solution $\omega(t)$ converges in $C^{\infty}$ on compact subsets of $X \setminus D$ to $\Phi^*\oke$, where $D$ is the union of the $(-2)$ curves on $X$.

Our result concerns the global Gromov-Hausdorff behavior of $\omega(t)$ as $t\rightarrow \infty$.   When  $X$ only contains disjoint irreducible $(-2)$-curves, it was shown in  \cite{SW4}  that the evolving metrics of the normalized K\"ahler-Ricci flow converge to the unique K\"ahler-Einstein orbifold metric on $\Xc$ in the Gromov-Hausdorff topology. However, when the $(-2)$ curve is not irreducible, the local metric models for the corresponding contracted $A$-$D$-$E$  singularities are usually complicated and not explicit, and so the arguments in \cite{SW2, SW4} cannot be immediately applied. 

The following is our main result.  We establish the global geometric convergence for the normalized K\"ahler-Ricci flow on all minimal surfaces of general type starting with any initial K\"ahler metric.

\begin{theorem} \label{mainthm} Let $X$ be a minimal surface of general type and $\omega(t)$ be the solution of the normalized K\"ahler-Ricci flow (\ref{eqn:KRF}). As $t \rightarrow \infty$, the K\"ahler manifolds $(X, \omega(t))$ converge in the Gromov-Hausdorff sense to $(X_{\emph{can}}, \omega_{\emph{KE}})$. In particular, the diameter of $(X, \omega(t))$ is uniformly bounded for all $t$ and the convergence is smooth on the smooth part of $X_{\emph{can}}$. 

\end{theorem}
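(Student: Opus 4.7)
The plan is to reduce the normalized K\"ahler-Ricci flow to a parabolic Monge-Amp\`ere equation at the level of potentials, and then to combine the known smooth convergence on $X\setminus D$ with a non-collapsing argument that controls the diameter of neighborhoods of $D$ without needing explicit local models for the contracting ADE configurations. Fix a smooth semi-positive $(1,1)$-form $\chi$ in $c_1(K_X)$ (for instance $\tfrac{1}{m}\Phi^*\ofs$ coming from the pluricanonical embedding), write $\omega(t)=\hat{\chi}_t+\sqrt{-1}\,\partial\dbar\varphi(t)$ with $\hat{\chi}_t=e^{-t}\omega_0+(1-e^{-t})\chi$, so that (\ref{eqn:KRF}) becomes
\[
\frac{\partial\varphi}{\partial t}=\log\frac{(\hat{\chi}_t+\sqrt{-1}\,\partial\dbar\varphi)^2}{\Omega}-\varphi
\]
for a fixed smooth volume form $\Omega$ on $X$. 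By Tsuji and Tian-Zhang, $\|\varphi(t)\|_{L^\infty}\le C$ and $\omega(t)\to\Phi^*\oke$ smoothly on compact subsets of $X\setminus D$, so the remaining task is to control the global geometry in a neighborhood of $D$.

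The two key analytic inputs I would establish are, first, the uniform scalar curvature bound $|R(\omega(t))|\le C$ due to Zhang and Song-Tian, and second, a uniform $\kappa$-non-collapsing statement: there exist $r_0,\kappa>0$, independent of $t$, such that every metric ball $B_r^{\omega(t)}(x)$ with $r\le r_0$ satisfies $\mathrm{Vol}_{\omega(t)}(B_r^{\omega(t)}(x))\ge\kappa r^4$. The non-collapsing would come from Perelman's $\mathcal{W}$-entropy adapted to the normalization term $-\omega$; the scalar curvature bound supplies the Riemannian input needed to extract the $r^4$ volume lower bound in complex dimension two. In parallel, the uniform $L^\infty$ bound on $\varphi$ together with the Monge-Amp\`ere equation gives $\omega(t)^2\le C\Omega$ uniformly in $t$, and therefore $\mathrm{Vol}_{\omega(t)}(U)\le C\int_U\Omega$ for every open $U\subset X$, a bound which is small on any thin neighborhood of $D$.

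These two ingredients combine to force the diameter of a neighborhood of the contracting locus to be small. If $\delta>0$ is small and $U_\delta:=\Phi^{-1}(B_\delta(p))$ for an orbifold point $p\in\Xc$, then $\mathrm{Vol}_{\omega(t)}(U_\delta)=O(\delta^4)$. Should $U_\delta$ contain two points at $\omega(t)$-distance $\ge A\delta$ for some large constant $A$, then $U_{2\delta}$ would contain a metric ball of radius comparable to $\delta$ whose volume, by non-collapsing, is at least $\kappa\delta^4$; this contradicts the volume bound once $A$ is chosen large enough. Coupled with the smooth convergence on $X\setminus D$, this produces a uniform diameter bound and shows that the holomorphic map $\Phi:(X,\omega(t))\to(\Xc,\oke)$ is an $\epsilon$-Gromov-Hausdorff approximation for $t$ sufficiently large: almost-surjectivity follows from smooth convergence on the regular locus, while the almost-isometry property follows from the vanishing diameter of the fibers over shrinking balls around the orbifold points. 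This gives Theorem \ref{mainthm}.

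The principal obstacle is the passage from Monge-Amp\`ere volume information to a genuine metric diameter bound near an ADE cluster of $(-2)$-curves. For a single irreducible $(-2)$-curve the explicit Eguchi-Hanson metric provides an effective local comparison, as exploited in \cite{SW4}; for $D_n$ or $E_n$ configurations the relevant local Calabi-Yau model is a Kronheimer gravitational instanton, which has no explicit description, so direct comparison with a model metric is no longer available. The model-free substitute is Perelman's non-collapsing, and the delicate technical step is to verify that entropy monotonicity, together with the modification coming from the normalization term $-\omega$, yields a non-collapsing constant and scale that are truly uniform as $t\to\infty$ on a surface of general type. Once this step is in hand, the remainder is a soft blend of volume convergence and smooth convergence away from $D$, and the specific combinatorial type of the $(-2)$-configuration plays no further role.
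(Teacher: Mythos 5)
Your approach is genuinely different from the paper's, and closer in spirit to the Tian--Zhang preprint cited in the introduction: you propose to use Perelman-type $\kappa$-non-collapsing together with the Monge--Amp\`ere volume bound $\omega(t)^2\le C\,\Omega$, whereas the paper works entirely with maximum-principle estimates against explicit local reference metrics. Concretely, the paper proves pointwise upper bounds $\omega(t)\le C(|u|^{-4/3}\sqrt{-1}\,du\wedge d\bar u + |v|^{-4/3}\sqrt{-1}\,dv\wedge d\bar v)$ near each $(-2)$-curve and each intersection point (Lemmas~\ref{lemmaestimates} and~\ref{lemmacon}), using a cleverly chosen conical reference metric $\ocon$ of cone angle $2\pi/3$ whose curvature vanishes, so the parabolic Schwarz lemma applies cleanly. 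These pointwise bounds are integrable along radial paths and give the diameter estimate $\textrm{diam}_{g(t)}(\tilde U_\delta)=O(\delta^{1/3})$ directly, without ever touching Perelman's entropy, Sobolev constants, or Cheeger--Colding theory. The two routes buy different things: yours, if it could be completed, would be more robust and model-independent; the paper's is elementary and self-contained, and does not need the a priori existence of $\oke$.

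The central gap in your proposal is the uniform non-collapsing, and you correctly identify it but do not close it. For the normalized K\"ahler--Ricci flow on a minimal surface of general type, a $\kappa$-non-collapsing constant and scale $r_0$ that are truly uniform for all $t\in[0,\infty)$ do not follow directly from Perelman's finite-time non-collapsing; they require control of the $\mathcal W$-entropy (or the log-Sobolev constant) under the $-\omega$ normalization as $t\to\infty$. This is a substantial theorem, not a routine adaptation, and until you prove it the rest of the argument is conditional. Beyond that, the contradiction step as written does not quite close: a metric ball of radius comparable to $\delta$ has volume $\ge\kappa\delta^4$ while $U_{2\delta}$ has volume $\le C\delta^4$, and there is no contradiction unless $\kappa>C$, which you cannot arrange. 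To get a genuine contradiction you need a ball of radius $\sim A\delta$ for large $A$ \emph{inside} $U_\delta$; for that you must first show that $B_r^{\omega(t)}(x)\subset U_\delta$ for some $x$ and $r\sim A\delta$. This can be arranged by bounding $\textrm{diam}_{\omega(t)}(\partial U_\delta)$ using the smooth convergence $\omega(t)\to\Phi^*\oke$ on the fixed compact shell $\partial U_\delta\subset X\setminus D$, observing that a point realizing the large diameter must then lie far from $\partial U_\delta$; but your sketch does not make this argument, and the containment $B_\delta^{\omega(t)}(x)\subset U_{2\delta}$ as stated is simply false in general, since $\omega(t)$-balls near $D$ bear no a priori relation to $g_0$-tubular neighborhoods. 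You should also keep in mind that the $\omega(t)\to\Phi^*\oke$ convergence is only locally uniform on $X\setminus D$, so extracting a bound of the form $\textrm{diam}_{\omega(t)}(\partial U_\delta)\le C\delta$ uniformly in both $t$ and small $\delta$ needs a separate argument.
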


The local smooth convergence was already proved in \cite{Ts, TZo} away from the $(-2)$ curves on $X$. Theorem \ref{mainthm} implies that all the $(-2)$ curves on $X$ are contracted to orbifold points by the K\"ahler-Ricci flow as $t\rightarrow \infty$. More precisely, if $E$ is a connected $(-2)$ curve on $X$, then for any $\varepsilon>0$, there exists an open neighborhood $U_{E, \varepsilon} $ of $E$ in $X$ such that the diameter of $U_{E, \varepsilon}$ with respect to the evolving metrics $\omega(t)$ is less than $\varepsilon$ for all sufficiently large $t$. Our approach does not rely on the a priori existence of the limiting orbifold K\"ahler-Einstein metric on $X_{\textrm{can}}$ constructed in \cite{Kob}. 

A surface of general type is a complex surface whose minimal model is a minimal surface of general type. Therefore, a surface of general type can be obtained by blowing up a minimal surface of general type finitely many times. Combined with the results of \cite{SW2, SW4}, we obtain:

\begin{corollary} Let $X$ be a general type surface. Then the normalized K\"ahler-Ricci flow on $X$ starting with any initial K\"ahler metric $g_0$ is continuous through finitely many contraction surgeries in the Gromov-Hausdorff topology for $t\in [0, \infty)$ and converges  in the Gromov-Hausdorff topology to $(X_{\emph{can}}, g_{\emph{KE}})$. The convergence is smooth away from the $(-2)$-curves, where $g_{\emph{KE}}$ is the unique orbifold K\"ahler-Einstein metric on $X_{\emph{can}}$.

\end{corollary}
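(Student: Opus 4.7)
The plan is to iterate the finite-time contraction results of \cite{SW2, SW4} until we reach the minimal model, and then invoke Theorem \ref{mainthm} for the infinite-time behavior. Let $X$ be a surface of general type with minimal model $X_{\min}$, so that there is a birational morphism $\pi : X \to X_{\min}$ factoring as a finite sequence of blow-downs of $(-1)$-curves
\[
X = X^{(0)} \xrightarrow{\pi_1} X^{(1)} \xrightarrow{\pi_2} \cdots \xrightarrow{\pi_N} X^{(N)} = X_{\min}.
\]
The number $N$ equals $b_2(X) - b_2(X_{\min})$, which is finite and fixed by the birational geometry of $X$.

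First, I would run the normalized K\"ahler-Ricci flow (\ref{eqn:KRF}) starting from the given initial K\"ahler metric $g_0$ on $X$. Since $K_X$ is not nef, the flow develops a first finite-time singularity at some time $T_1 < \infty$, determined by the cohomological condition that $[\omega(T_1)]$ lies on the boundary of the K\"ahler cone. General theory (\cite{SoT3}) together with the assumption that $X$ is of general type implies that this singularity is non-collapsing: the total volume stays bounded away from zero. The results of \cite{SW2, SW4} then apply: $\omega(t)$ converges smoothly on compact subsets of $X \setminus E_1$ to $\pi_1^* \omega^{(1)}(T_1)$, where $E_1$ is the union of finitely many disjoint $(-1)$-curves and $\omega^{(1)}(T_1)$ is a K\"ahler current on $X^{(1)}$ which is smooth outside the images of $E_1$. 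Moreover these theorems give Gromov-Hausdorff continuity through the surgery, identifying the limit with a compact metric space isometric to $(X^{(1)}, \omega^{(1)}(T_1))$, and show that the weak K\"ahler-Ricci flow on $X^{(1)}$ developed in \cite{SoT3} can be restarted smoothly for $t > T_1$ on $X^{(1)}$ and is GH-continuous at $t = T_1$.

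Next, I would iterate this procedure. On $X^{(1)}$ one either has $K_{X^{(1)}}$ nef, in which case $X^{(1)} = X_{\min}$ and we proceed to the final step, or another finite-time singularity occurs at some $T_2 > T_1$, to which \cite{SW2, SW4} applies again. Since each such surgery strictly decreases $b_2$ and the number of $(-1)$-curves on $X$ is bounded by $N$, after finitely many (at most $N$) surgery times $0 < T_1 < T_2 < \cdots < T_k < \infty$ we arrive at the minimal model $X_{\min}$ with an initial K\"ahler metric $\omega^{(k)}(T_k)$, which is smooth outside a finite collection of $A$-$D$-$E$ orbifold points corresponding to contracted $(-2)$-configurations inherited from the iterated blow-downs. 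The key point is that at each stage the flow is globally GH-continuous, so concatenating the GH limits along $[0, T_1], [T_1, T_2], \ldots, [T_{k-1}, T_k]$ gives a continuous family of compact metric spaces for $t \in [0, T_k]$.

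Finally, for $t \ge T_k$ I would apply Theorem \ref{mainthm} on the minimal model $X_{\min}$. Strictly speaking, Theorem \ref{mainthm} is stated for smooth initial K\"ahler metrics, so one either notes that its proof, being based on a priori estimates that do not require smoothness of $\omega_0$ on the $(-2)$-curves plus parabolic regularization, carries over to this singular initial datum, or, more cleanly, one uses the long-time smoothing of the weak K\"ahler-Ricci flow from \cite{SoT3} to see that $\omega^{(k)}(T_k + \delta)$ is a genuine K\"ahler form on $X_{\min}$ in the orbifold sense for small $\delta > 0$, and then applies Theorem \ref{mainthm} from time $T_k + \delta$ onward. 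Either way, one concludes that $(X_{\min}, \omega^{(k)}(t))$ converges in the Gromov-Hausdorff sense to $(X_{\mathrm{can}}, \omega_{\mathrm{KE}})$ as $t \to \infty$, with smooth convergence away from the $(-2)$-curves. Combining with the finitely many GH-continuous surgeries on $[0, T_k]$ yields the corollary.

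The main technical point is the second paragraph: one must check at each surgery time that the hypotheses of \cite{SW2, SW4} are satisfied, in particular that the limiting K\"ahler class on $X^{(j)}$ at time $T_{j+1}$ is again big, so that $X^{(j+1)}$ is still of general type and the contraction is a genuine divisorial contraction of disjoint $(-1)$-curves. This follows from the minimal model program: since $X$ is of general type, each $X^{(j)}$ in the sequence is of general type, and $K_X + (1-T_j) \alpha_0$ (the cohomological class driving the flow, with $\alpha_0 = [\omega_0]$) remains big for all $T_j$ up to the minimal model. The bookkeeping of these classes and the identification of the contracted curves with the blow-down morphisms $\pi_j$ is the only delicate part, and is already implicit in \cite{SoT3, SW2, SW4}.
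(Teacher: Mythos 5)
Your overall approach matches the paper's intended argument: a general type surface is an iterated blow-up of its minimal model, so you run the flow, apply the finite-time $(-1)$-curve contraction and Gromov--Hausdorff continuity results of \cite{SW2, SW4} through each surgery, arrive at the smooth minimal model after finitely many steps, and then invoke Theorem \ref{mainthm}. The paper itself gives no further details and simply refers to \cite[Theorem 1.1]{SW4} for the surgical contractions.

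One conceptual slip to correct. In your second and third paragraphs you describe the metric $\omega^{(k)}(T_k)$ on $X_{\min}$ as ``smooth outside a finite collection of $A$-$D$-$E$ orbifold points,'' and later say it is a K\"ahler form ``in the orbifold sense.'' This is wrong: contracting a disjoint union of $(-1)$-curves produces a \emph{smooth} surface, so each intermediate $X^{(j)}$ and in particular $X_{\min}$ is a smooth complex manifold with no orbifold points. The $(-2)$-curves live on $X_{\min}$ and are \emph{not} contracted at any finite surgery time; they are contracted only in the limit $t\to\infty$, and the $A$-$D$-$E$ orbifold singularities appear on $X_{\textrm{can}}$, not on $X_{\min}$. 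So $\omega^{(k)}(T_k)$ is a K\"ahler current with bounded potentials on the smooth manifold $X_{\min}$, smooth away from the finitely many smooth points that are the images of the contracted $(-1)$-curves; by the instantaneous smoothing of the weak K\"ahler-Ricci flow from \cite{SoT3}, $\omega^{(k)}(T_k+\delta)$ is a genuine smooth K\"ahler form on $X_{\min}$ for any $\delta>0$, and Theorem \ref{mainthm} applies directly as stated with no need for an orbifold extension. This does not affect the validity of your argument, but the description of the intermediate objects should be corrected.
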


We refer the readers to \cite[Theorem 1.1]{SW4}   for details of the canonical surgical contractions by the K\"ahler-Ricci flow. 

As this paper was nearing completion, the authors became aware of a preprint of Gang Tian and Zhenlei Zhang \cite{TZl} where  the $L^p$ Cheeger-Colding-Tian theory is used to obtain related results.   We remark that the estimates in our proof do not require the a priori existence of the orbifold K\"ahler-Einstein metric on the canonical model, and we only  
use the maximum principle and other elementary arguments.

The authors thank the referee for a helpful comment on a previous version of this paper.

\section{A priori estimates} 

Let $X$ be a minimal surface of general type. Since $K_X$ is big and nef, the pluricanonical linear system $|mK_X|$ is base point free for sufficiently large $m\in \mathbb{Z}^+$ and it induces a birational holomorphic map $\Phi_m = |mK_X|: X \rightarrow X_{\textrm{can}}$. The sequence $\Phi_m $ stabilizes for sufficiently large $m$ and $\Phi= \Phi_m$ contracts all the $(-2)$-curves on $X$. In particular, $K_X = \Phi^* K_{X_{\textrm{can}}}$.

We first reduce the K\"ahler-Ricci flow to a parabolic complex Monge-Amp\`ere equation, as in \cite{Ts, TZo, SW2}.  
Define a closed $(1,1)$ form $\chi$ on $X$ by
$$\chi = \frac{1}{m}\Phi^*\omega_{\textrm{FS}} \in c_1(K_X),$$
where  $\Phi: X \rightarrow \Xc \subset \mathbb{P}^N$ is the map induced by holomorphic sections of $K_X^m$, and $\omega_{\textrm{FS}}$ is the Fubini-Study metric on $\mathbb{P}^N$. 
Choose $\Omega$ to be the smooth volume form satisfying 
$$\chi = \ddbar \log \Omega, \quad \int_X \Omega = \int_X \omega_0^2$$ 
Define a family of reference K\"ahler metrics $\hat{\omega}=\hat{\omega}(t)$ on $X$ by $$\hat\omega = \chi + e^{-t}(\omega_0 - \chi).$$
Then the K\"ahler-Ricci flow \eqref{eqn:KRF} is equivalent to the parabolic complex Monge-Amp\`ere equation for $\varphi=\varphi(t)$ given by 
\begin{equation}\label{eqn:MAKRF}
\begin{aligned}
&\frac{\partial \varphi}{\partial t} = \log \frac{(\hat\omega + \ddbar \varphi)^2}{\Omega} - \varphi, \quad
&\varphi|_{t=0} = 0.
\end{aligned}
\end{equation}
Namely, if $\varphi$ solves (\ref{eqn:MAKRF}) then $\omega(t) := \hat{\omega}+\ddbar \varphi$ solves \eqref{eqn:KRF}. Conversely if we are given a solution to \eqref{eqn:KRF} then we can obtain a solution $\varphi=\varphi(t)$ of (\ref{eqn:MAKRF}).

Let $D= \sum_{i=1}^M D_i$ be the $(-2)$ curves contracted by $\Phi$, where $D_i$ is an irreducible component of $D$ and $D_i$ is a smooth rational curve with $D_i\cdot D_i = -2$. All components of $D$ meet transversally with each other (see \cite{BHPV}, for example). Since the support of $D$ is the exceptional locus of the pluricanonical map $\Phi$ and $K_X$ is big and nef, the Kodaira lemma implies that there exist $a_i \in \mathbb{Q}^+$ for $i=1, ..., M$ such that 
the $\mathbf{R}$-divisor $$K_X - \varepsilon D' $$ is ample for all $\varepsilon \in (0,1)$, where $D'= \sum_{i=1}^M a_i D_i$.

Choose $\sigma_i\in\mathcal O(D_i)$, a holomorphic section of the line bundle $[D_i]$ associated to the divisor $D_i$, vanishing to order 1 along $D_i$.  We fix now  $\varepsilon_0 \in (0, 1)$.  Then  there exist  Hermitian metrics $h_i$ on $[D_i]$ such that
\begin{equation} \label{chime}
\chi - \varepsilon_0 R_h \ge c_{\varepsilon_0} \, \omega_0,
\end{equation}
for some positive constant $c_{\varepsilon_0}$, 
where we are  using the notation $$R_h := -\ddbar \log h= \sum_{i=1}^M -a_i\ddbar \log h_i,$$ for the curvature form of the Hermitian metric $h= \otimes_i (h_i)^{a_i}$ on the $\mathbf{Q}$-line bundle associated to $[D'] = \sum_{i=1}^M a_i[D_i]$.

In the next lemma we gather together some a priori estimates which are already known by the work of Tsuji, Tian-Zhang and Zhang \cite{Ts, TZo, Z1, Z2}.

\pagebreak[3]
\begin{lemma}\label{lemma 1}  There exist uniform constants $C>0$ and $\lambda > 0$ such that the following hold.
\begin{enumerate}[label=(\roman*)]
\item\label{item 1} $\displaystyle{| \varphi| + | \dot{\varphi}| + |R| \le C}$ on $X \times [0,\infty)$.
\medskip

\item\label{item 2} On $X \times [0,\infty)$, we have $$\omega(t) \le C|\sigma|_h^{-2\lambda} \omega_0,$$ where $|\sigma|_h^2 = \prod_i |\sigma_i|_{h_i}^{2}$.
\item For any compact $K\subset X\backslash D$, there exists for every $\ell=0,1,2, \ldots$  
a constant $C_{K,\ell}$ such that
$$\|\varphi\|_{C^{\ell}(K)}\le C_{K,\ell}.$$
\end{enumerate}
\end{lemma}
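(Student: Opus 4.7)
The three estimates are essentially in \cite{Ts,TZo,Z1,Z2}, and my plan is to assemble them in three short stages.

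\textbf{Stage 1 --- item (i).} The upper bound $\varphi\le C$ follows from the maximum principle applied to \eqref{eqn:MAKRF}: at a space--time maximum $\ddbar\varphi\le 0$ and $\dot\varphi\ge 0$, so $\varphi\le\log(\hat\omega^2/\Omega)\le C$, using that $\hat\omega\le C\omega_0$ uniformly. For the lower bound I would use the Tsuji barrier $\psi_\varepsilon=\varphi-\varepsilon\log|\sigma|_h^2$, which tends to $+\infty$ along $D$, so the space--time minimum is attained in $\{|\sigma|_h>0\}$. At such a minimum $\ddbar\varphi\ge -\varepsilon R_h$; combining \eqref{chime} with $\chi\ge 0$ gives $\hat\omega-\varepsilon R_h\ge c\omega_0$ for small $\varepsilon$ and all $t$, hence $\omega^2\ge c^2\omega_0^2$. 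Inserting this into \eqref{eqn:MAKRF} and using $\dot\psi_\varepsilon\le 0$ yields $\psi_\varepsilon\ge -C$, which controls $\varphi$ on $X\setminus D$; extending the bound across $D$ requires Ko{\l}odziej-type pluripotential estimates for degenerate Monge--Amp\`ere equations in the parabolic setting, as in \cite{TZo,EGZ}. Given the $C^0$ bound on $\varphi$, the bound $|\dot\varphi|\le C$ follows from the maximum principle applied to suitable combinations of the identities
$$(\partial_t-\Delta)(\dot\varphi+\varphi)=\mathrm{tr}_\omega\chi-2, \qquad (\partial_t-\Delta)\varphi=\dot\varphi+\mathrm{tr}_\omega\hat\omega-2.$$
The scalar curvature bound $|R|\le C$ is Zhang's theorem \cite{Z2}, which I would invoke as a black box.

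\textbf{Stage 2 --- item (ii).} The standard parabolic Schwarz lemma gives
$$(\partial_t-\Delta)\log\mathrm{tr}_{\omega_0}\omega\le C_0\,\mathrm{tr}_\omega\omega_0-1,$$
with $C_0$ depending only on the bisectional curvature of $\omega_0$. I would apply the maximum principle to
$$Q:=\log\mathrm{tr}_{\omega_0}\omega-A\varphi+\lambda\log|\sigma|_h^2$$
for $A$ large and $\lambda>0$ small; the identity $(\partial_t-\Delta)(-A\varphi)=-A\dot\varphi+2A-A\,\mathrm{tr}_\omega\hat\omega$ together with \eqref{chime} absorbs the bad $C_0\,\mathrm{tr}_\omega\omega_0$ term into $A\,\mathrm{tr}_\omega\hat\omega$. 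Since $Q\to-\infty$ on $D$, the space--time maximum lies in $X\setminus D$, and the resulting inequality reads $\mathrm{tr}_{\omega_0}\omega\le C|\sigma|_h^{-2\lambda}$. Combined with the pointwise inequality $\omega\le(\mathrm{tr}_{\omega_0}\omega)\,\omega_0$ available on the complex two-dimensional surface, this is item (ii).

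\textbf{Stage 3 --- item (iii) and main obstacle.} On any compact $K\subset X\setminus D$, $|\sigma|_h$ is bounded below, so (ii) gives $\omega\le C_K\omega_0$ on $K$. Combined with $\omega^2=e^{\dot\varphi+\varphi}\Omega$ and the $C^0$ control of $\dot\varphi+\varphi$ from (i), this forces $\omega\ge c_K\omega_0$ on $K$, so $\omega$ is uniformly equivalent to $\omega_0$ there. The complex parabolic Evans--Krylov theorem applied to \eqref{eqn:MAKRF} then yields uniform $C^{2,\alpha}$ control on $\varphi$, after which a standard Schauder bootstrap produces $C^\ell$ bounds for every $\ell$. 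The subtlest point of the lemma is the lower bound on $\varphi$ in a full neighborhood of $D$: the Tsuji barrier only controls $\varphi$ where $|\sigma|_h>0$, and extending the control uniformly across $D$ is precisely where pluripotential theory, rather than the maximum principle alone, is essential.
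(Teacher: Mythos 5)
Your proposal is correct and follows the same route as the paper, which proves this lemma by invoking the arguments of Tsuji, Tian--Zhang, and Zhang together with Ko{\l}odziej's pluripotential estimates for the uniform $C^0$ bound --- and you rightly flag that last ingredient as the one step a Tsuji-type barrier alone cannot deliver, since the barrier only gives $\varphi \ge \varepsilon\log|\sigma|_h^2 - C$, which degenerates on $D$. The single cosmetic divergence is in item (iii), where you appeal to the complex parabolic Evans--Krylov theorem, whereas the paper cites a Calabi-type third-order estimate as in \cite{PSeS, ShW}; both are standard and interchangeable once (i) and (ii) give uniform local equivalence of $\omega(t)$ and $\omega_0$ on compact subsets of $X\setminus D$.
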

\begin{proof}  Part (i) is due to Tian-Zhang and Zhang \cite{TZo, Z1, Z2}, and makes use of  the pluripotential theory estimates of Ko{\l}odziej \cite{Kol} (for a recent survey, see \cite{PSoS}).  Part (ii) was shown by Tsuji \cite{Ts}.   Part (iii) can be proved using a Calabi-type third order estimate and then higher order estimates (see \cite{PSeS} and \cite{ShW}, for example).\end{proof}

Next, we discuss the local model for an irreducible $(-2)$-curve in $X$, following the discussion in \cite{SW4}.  Fix an irreducible $(-2)$-curve $D_j$ in $X$.   Then there exists a holomorphic map $\Psi: X \rightarrow \mathbb{P}^K$ contracting $D_j$ whose image $Y:= \Psi(X) \subset \mathbb{P}^K$ is subvariety with single orbifold point $y=\Psi(D_j) \in Y$.  Using the notation above, $h_j$ is a Hermitian metric on the line bundle $[D_j]$ and $\sigma_j$ is a holomorphic section of this line bundle, vanishing to order 1 along $D_j$.  

Write $B \subset \mathbb{C}^2$ for the unit ball centered at the origin.  Write $\tilde{B} = B/\mathbb{Z}_2$, where $\mathbb{Z}_2$ acts on $B$ by
$$(z^1, z^2) \mapsto (-z^1, -z^2).$$
We can then identify a neighborhood of the orbifold point $y \in Y$ with $\tilde{B}$.  Write $\oeuc$ for the Euclidean metric
$$\omega_{\mathrm{Eucl}} = \sqrt{-1} dz^1\wedge d\bar z^1 + \sqrt{-1}dz^2\wedge d\bar z^2$$
on $B$, which descends to give a smooth  orbifold metric on $\tilde{B}$.  Define $r$ by
$$r  = \sqrt{|z^1|^2 +  |z^2|^2},$$
on $B$ (and also on $\tilde{B}$).  The map $\Psi$ is a biholomorphism from $\Psi^{-1}(\tilde{B} \setminus \{0\})$ to $\tilde{B} \setminus \{0 \}$.  The curve $D_j$ is given by $\{\sigma_j=0\}$ and we have (cf. \cite[Section 2]{SW4})
\begin{equation} \label{r4}
|\sigma_j|^2_{h_j} = r^4,
\end{equation}
where we are identifying $r$ with its pull-back via $\Psi$.  In what follows we will, without comment, identify via $\Psi$ the sets $\Psi^{-1}(\tilde{B} \setminus \{0\})$ and $\tilde{B} \setminus \{0 \}$, and similarly for the various functions and $(1,1)$ forms on these sets.  Recalling that $\omega_0$ is a fixed K\"ahler metric on $X$, we note that on $\tilde{B} \setminus \{0\}$ we have
\begin{equation} \label{simplebd}
C^{-1} r^2 \oeuc \le \omega_0 \le \frac{C}{r^2} \oeuc.
\end{equation}
Moreover, if we define the vector field $V$ on $B \setminus \{0 \}$ by
\begin{equation} \label{defineV}
V= z^1 \frac{\partial}{\partial z^1} + z^2  \frac{\partial}{\partial z^2}
\end{equation}
then we have
\begin{equation} \label{Wbounds}
\frac{r^4}{C} \le |V|^2_{\omega_0} \le Cr^4,
\end{equation}
for a uniform $C>0$.

A final remark about the local model is that if $p$ is any point in $D_j$  then we can find holomorphic coordinates $u,v$ in a neighborhood $W$ of $p$, and centered at $p$, such that $D_j\cap W= \{u=0 \}$ and
the map $\Psi$ from $W$ to $\tilde{B}$ is given by
$(z^1, z^2): W \rightarrow \tilde{B}$ where
\begin{equation} \label{uv}
u=(z^1)^2, \quad v=\frac{z^2}{z^1}.
\end{equation}
Namely,  $z^1=u^{1/2}$ and $z^2 = u^{1/2}v$, which gives a well-defined map into $\tilde{B}$.

Our next lemma shows that we have estimates for the evolving metric $\omega(t)$ on $\tilde{B} \setminus \{ 0 \}$.  However, unlike in \cite{SW4}, our $(-2)$-curve $D_j$ may intersect other $(-2)$ curves.  To begin with, we only obtain estimates away from these intersection points.  To make this more precise, fix any open tubular neighborhood $U^j_{\eta}$ of radius $\eta>0$, with respect to $\omega_0$, in $X$ of $\bigcup_{i\neq j} D_i$.  This open set $U^j_{\eta}$ may  intersect $\tilde{B} \setminus \{ 0 \}$.

We have the following estimates, which are analogous to those of Lemma 4.1 and Lemma 10.1 in \cite{SW4}. 

\begin{lemma} \label{lemmaestimates}  Fix $j \in \{ 1, \ldots, M\}$ and $\eta>0$.   
With the notation above, there exist constants $C$ and $\delta>0$ such that on $(\tilde{B} \setminus \{ 0 \}) \cap (X - U^j_{\eta})$, depending only on the fixed initial data and $\eta>0$ such that
\begin{enumerate}
\item[(i)] $\displaystyle{\omega \le \frac{C}{r^2} \omega_{\emph{Eucl}}}$.
\item[(ii)] $\displaystyle{\omega \le \frac{C}{r^{2(1-\delta)}} (\omega_0 + \omega_{\emph{Eucl}})}$.
\smallskip

\item[(iii)] $\displaystyle{|V|^2_{\omega} \le Cr^{4/3}}$, where $V$ is the vector field defined by (\ref{defineV}). 
\end{enumerate}

\end{lemma}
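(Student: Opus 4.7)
My plan is to prove (i)--(iii) by three parabolic maximum-principle arguments modeled on [SW4, Lemmas 4.1 and 10.1], each applied to a suitable auxiliary quantity on the parabolic cylinder $((\tilde{B}\setminus\{0\})\cap(X-U^j_\eta))\times[0,T]$. The restriction to $X-U^j_\eta$ ensures that $|\sigma_i|_{h_i}\ge c(\eta)>0$ for $i\neq j$, so that $r=|\sigma_j|_{h_j}^{1/2}$ is the only function that can degenerate on this region, and this degeneration is controlled by incorporating $\log|\sigma_j|^2_{h_j}$ into each barrier. Throughout, constants are allowed to depend on $\eta$.

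For (i), I would apply the maximum principle to
$$Q_1 \;=\; \log\mathrm{tr}_{\omega_{\mathrm{Eucl}}}(\omega) \;+\; \tfrac{1}{2}\log|\sigma_j|^2_{h_j} \;-\; A\varphi,$$
for $A$ large. Since $\omega_{\mathrm{Eucl}}$ is flat, the Chern--Lu/Aubin--Yau inequality gives $(\partial_t-\Delta_\omega)\log\mathrm{tr}_{\omega_{\mathrm{Eucl}}}\omega\le C$, while $\ddbar\log|\sigma_j|^2_{h_j}=-R_{h_j}$ is bounded and $(\partial_t-\Delta_\omega)(-A\varphi)=-A\dot\varphi+2A-A\mathrm{tr}_\omega\hat\omega$. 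Using the ampleness \eqref{chime} (up to the twist by $\varepsilon_0\log|\sigma|^2_h$), large $A$ forces $\mathrm{tr}_\omega\omega_0\le C$ at any interior maximum of $Q_1$; together with the Monge--Amp\`ere equation (giving $\det\omega\sim\det\omega_0$) and AM--GM, this bounds $Q_1$ at such a maximum. On the parabolic boundary, $Q_1$ is controlled by Lemma~\ref{lemma 1} and \eqref{simplebd} (with $\eta$-dependent constants on $\partial\tilde{B}$, $\partial U^j_\eta\cap\tilde{B}$, and at $t=0$), and the term $\tfrac{1}{2}\log|\sigma_j|^2\to-\infty$ as $r\to 0$ dominates the polynomial upper bound of Lemma~\ref{lemma 1}(ii), ruling out the sup being attained near $D_j$. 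Rearranging yields (i). Part (ii) is obtained by the same scheme with $\omega_{\mathrm{Eucl}}$ replaced by $\omega_0+\omega_{\mathrm{Eucl}}$ and coefficient $\tfrac{1-\delta}{2}$ in place of $\tfrac{1}{2}$: the now nonzero bisectional curvature of $\omega_0+\omega_{\mathrm{Eucl}}$ contributes an error $C\mathrm{tr}_\omega(\omega_0+\omega_{\mathrm{Eucl}})$ in the Chern--Lu step, which is absorbed into $-A\mathrm{tr}_\omega\hat\omega$ at the cost of $\delta>0$ in the barrier exponent.

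For (iii), the Euler vector field $V=z^i\partial_{z^i}$ is $\mathbb{Z}_2$-invariant and, via \eqref{uv}, equals $2u\partial_u$ in the $(u,v)$-coordinates; it therefore extends to a holomorphic vector field on a neighborhood of $D_j$ in $X$ vanishing to first order along $D_j$. The K\"ahler--Ricci flow gives the clean identity $(\partial_t-\Delta_\omega)|V|^2_\omega = -|\nabla V|^2_\omega-|V|^2_\omega$ (the Ricci terms cancelling), and Kato's inequality yields $(\partial_t-\Delta_\omega)\log|V|^2_\omega\le -1$. I then apply the maximum principle to
$$Q_3 \;=\; \log|V|^2_\omega \;-\; \tfrac{1}{3}\log|\sigma_j|^2_{h_j} \;-\; A\varphi,$$
the exponent $1/3$ being chosen so that $Q_3\le C$ delivers (iii). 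At an interior maximum the analysis parallels (i): the ampleness and AM--GM arguments force $\omega\sim\omega_0$, whence $|V|^2_\omega\sim|V|^2_{\omega_0}\le Cr^4$, giving $Q_3\le(8/3)\log r+C\le C$. At $t=0$ and on $\partial\tilde{B}$ one checks $Q_3\le C$ directly from Lemma~\ref{lemma 1} and $|V|^2_{\omega_0}\le Cr^4$.

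The principal difficulty lies in the boundary analysis of (iii) on $\partial U^j_\eta\cap\tilde{B}$, which is the genuinely new feature compared with [SW4]. If $D_j$ meets another component $D_i$ of $D$, then this boundary piece contains sequences with $r\to 0$, on which neither the uniform bound $|V|^2_\omega\le C$ from (i) nor the bound $|V|^2_\omega\le Cr^{2\delta}$ from (ii) is a priori strong enough to control $Q_3$. To close the argument, one either bootstraps from (ii) combined with a truncation at $\{r\ge\varepsilon\}$ and shows that the interior-max bound on $\mathrm{tr}_\omega\omega_0$ (secured by choosing $A$ large via \eqref{chime}) produces $\varepsilon$-independent constants in the limit $\varepsilon\to 0$, or one exploits that near $\partial U^j_\eta$ the curve $D_j$ is geometrically locally the disjoint setting of [SW4, Lemma 10.1]. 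The precise exponent $4/3$ emerges from requiring the interior-max estimate ($|V|^2_\omega\sim r^4$ at the max) to be compatible with the barrier and boundary data at a single large $A$, and this balance is the most delicate point of the proof.
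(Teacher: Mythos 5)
Your modeling of the proof on [SW4] is correct in spirit, and your differential inequalities are fine (in particular the identity $(\partial_t-\Delta_\omega)|V|^2_\omega = -|\nabla V|^2_\omega - |V|^2_\omega$ and Kato do give $(\partial_t-\Delta_\omega)\log|V|^2_\omega \le -1$, as used in [SW2, Lemma 2.6]). But there is a genuine gap, which you flag only for part (iii) but which in fact afflicts all three parts: you have no control of your quantities on the boundary piece $\partial U^j_\eta\cap\tilde{B}$, and neither of the repairs you sketch is carried out. To see the problem concretely for $Q_1$: since $D_j$ may meet some $D_i$ with $i\neq j$, the boundary $\partial U^j_\eta$ intersects $D_j$, so $r$ is not bounded away from $0$ on $\partial U^j_\eta\cap\tilde{B}$. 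The only a priori control near $D_j$ is Lemma~\ref{lemma 1}(ii), $\omega\le C|\sigma|_h^{-2\lambda}\omega_0$; on $\partial U^j_\eta$ the factors $|\sigma_i|_{h_i}^{-2\lambda}$ for $i\neq j$ are bounded, but $|\sigma_j|_{h_j}^{-2\lambda}=r^{-4\lambda}$ is not. Combined with $\omega_0\le Cr^{-2}\oeuc$ this gives $\tr{\oeuc}{\omega}\le Cr^{-4\lambda-2}$ there, so
$$Q_1=\log\tr{\oeuc}{\omega}+\tfrac12\log|\sigma_j|^2_{h_j}-A\varphi\le C-4\lambda\log r\longrightarrow+\infty\quad\text{as } r\to 0 \text{ on }\partial U^j_\eta,$$
and the maximum principle cannot be closed. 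The barrier exponent you need to kill this ($\ge 4\lambda+2$) is incompatible with the one that delivers the desired conclusion ($=2$).

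The paper resolves this differently, and this is the key point your proposal misses: instead of cutting the domain at $\partial U^j_\eta$, the paper keeps the domain as the full punctured ball $\tilde{B}\setminus\{0\}$ and builds the factor $\prod_{i\neq j}|\sigma_i|^{4\lambda}_{h_i}$ into the quantity itself. Concretely, parts (i) and (ii) come from a single quantity
$$H_\ve=\log\tr{\omega_0}{\omega}+A\log\Bigl(|\sigma_j|^{1+\ve}_{h_j}\bigl(\textstyle\prod_{i\neq j}|\sigma_i|^{4\lambda}_{h_i}\bigr)\tr{\oeuc}{\omega}\Bigr)-A^2\varphi,$$
and part (iii) from $G_\ve=\log\bigl(|V|^{1+\ve}_\omega(\prod_{i\neq j}|\sigma_i|^{4\lambda}_{h_i})\tr{\oeuc}{\omega}\bigr)+A(|z^1|^2+|z^2|^2)$. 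The factor $\prod_{i\neq j}|\sigma_i|^{4\lambda}_{h_i}$ forces the quantity to $-\infty$ along $D_2,\ldots,D_M$, so an interior maximum cannot sit on those curves; the factor $|\sigma_j|^{1+\ve}_{h_j}$ (resp. $|V|^{1+\ve}_\omega$) with $\ve>0$ forces $-\infty$ at $r=0$ for each fixed $t$; the only remaining parabolic boundary is $\partial\tilde{B}\times[0,\infty)$ and $t=0$, both controlled by Lemma~\ref{lemma 1}. The maximum principle then gives a bound on all of $\tilde{B}\setminus\{0\}$; letting $\ve\to0$ and \emph{only then} restricting to $X-U^j_\eta$ absorbs the factor $(\prod_{i\neq j}|\sigma_i|^{4\lambda}_{h_i})^{-1}$ into the $\eta$-dependent constant. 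For (iii) the paper does not aim at $r^{4/3}$ directly from the barrier: after $\ve\to0$ and restricting, one has $|V|_\omega\tr{\oeuc}{\omega}\le C$, and the exponent $4/3$ then falls out of Cauchy--Schwarz $|V|^2_\omega\le r^2\tr{\oeuc}{\omega}$ combined with this product bound. Your proposed alternatives (an $\varepsilon$-truncation, or locally reducing to the disjoint case of [SW4, Lemma 10.1]) are not fleshed out, and the second one does not obviously apply since near $p_{ij}$ the local model is not that of a single $(-2)$-curve.
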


\begin{proof}
For simplicity of notation, we take $j=1$.
The proof is fairly straightforward, given the arguments of \cite{SW2, SW4}.  The idea is to modify the quantities in \cite{SW4} by adding a term which tends to negative infinity along the union of $D_2, \ldots, D_M$.  After applying the maximum principle, we will obtain a bound away from $U^1_{\eta}$.  

Define on $\tilde{B} \setminus \{ 0 \}$, for $\ve>0$ and $A>>1$ a large constant to be determined,
$$H_{\ve} = \log \tr{\omega_0}{\omega}+ A \log \left( |\sigma_1|_{h_1}^{(1+\ve)} \left( \prod_{i=2}^M | \sigma_i|^{4\lambda}_{h_i} \right)\tr{\oeuc}{\omega} \right) - A^2 \varphi,$$
 where $\lambda$ is the constant in item \ref{item 2} of  Lemma \ref{lemma 1}.  We wish to use the maximum principle to bound $H_{\ve}$ from above on $\tilde{B} \setminus \{0 \}$.  Using Lemma \ref{lemma 1}.(ii) and the fact that the curve $D_1$ is completely contained inside $\tilde{B}$, we see that
  the quantity $H_{\ve}$ is uniformly bounded from above on the boundary of $\tilde{B}$.   
In addition, we see from (\ref{simplebd}) that $H_{\ve}$ tends to $-\infty$ as $r$ tends to $0$, and $H_{\ve}$ tends to $-\infty$ on any curves $D_2, \ldots, D_M$ which intersect $\tilde{B}$.  We also have an upper bound for $H_{\ve}$ at time $t=0$.   We may suppose then that $H_{\ve}$ attains a maximum at an interior point $(x_0, t_0) \in (\tilde{B} \setminus \{ 0\}) \times (0,\infty)$, and $x_0$ does not lie on any of the curves $D_2, \ldots, D_M$.  It suffices to bound $H_{\ve}$ at this point $(x_0, t_0)$.
 
 First recall the following well-known pointwise differential inequality for a solution $\omega=\omega(t)$ of the K\"ahler-Ricci flow \cite{Y, A, C} (or see \cite[Prop. 3.2.5]{SW3} for a recent exposition)
 
\begin{equation} \label{di}
\left( \frac{\partial}{\partial t} - \Delta \right) \log \tr{\beta} \omega \le C_{\beta} \tr{\omega}{\beta} - 1,
\end{equation}
 for a fixed metric $\beta$, where $-C_{\beta}$ is the infimum of the bisectional curvature of $\beta$.  In particular, if the curvature of $\beta$ vanishes then we can take $C_{\beta}=0$.
 
 At $(x_0, t_0)$, we have, applying (\ref{di}) twice, first with $\beta=\omega_0$ and then with $\beta=\oeuc$, 
\begin{align*}
0 \le {} & (\frac{\partial}{\partial t}  - \Delta ) H_\varepsilon \\ \le {}  & C_0 \tr{\omega}\omega_0 + A\left( \frac{1+\varepsilon}{2} \right) \tr{\omega}{R_{h_1}} + 2A\lambda \tr{\omega}{\left(\sum_{i=2}^M R_{h_i}\right)} \\  & - A^2\dot\varphi 
 + 2A^2  - A^2 \tr{\omega}{\hat \omega}\\
= {} & C_0 \tr{\omega}\omega_0   - A^2\dot\varphi + 2A^2 \\
& - A^2 \tr{\omega}{\Big(\hat\omega - A^{-1}\left( \frac{1+\varepsilon}{2}\right) R_{h_1} - 2A^{-1}\lambda \sum_{i=2}^M R_{h_i} \Big)},
\end{align*}
where $-C_0$ is the lower bound for the bisectional curvature of $\omega_0$.
Using (\ref{chime}) and the definition of $\hat{\omega}$, we may choose $A$ uniformly large enough such that, for a uniform $c_0>0$,  $$\hat\omega - A^{-1}\left( \frac{1+\varepsilon}{2}\right) R_{h_1} - 2A^{-1}\lambda \sum_i R_{h_i}\ge c_0 \omega_0$$
and $A^2 c_0\ge C_0+1$.  Hence, using Lemma \ref{lemma 1}, part (i), we have
$$0 \le - \tr{\omega}{\omega_0} - A^2 \dot{\varphi} + CA^2,$$
for a uniform constant C.  Now recall from Lemma \ref{lemma 1} that $\varphi$ and $\dot{\varphi}$ (and hence also $\log (\omega^2/\omega_0^2)$) are uniformly bounded.  It follows that we obtain a uniform upper bound of $\tr{\omega}{\omega_0}$ at $(x_0, t_0)$, and from the inequality $\tr{\omega_0}{\omega} \le (\tr{\omega}{\omega_0}) \frac{\omega^2}{\omega_0^2}$, we also get an upper bound for $\tr{\omega_0}{\omega}$.  Then $H_{\ve}$ is bounded from above at $(x_0, t_0)$, and hence has a uniform upper bound 
on $(\tilde{B}\setminus \{ 0 \}) \times [0, \infty)$.  Moreover, our upper bound is independent of $\ve$.

Letting $\varepsilon\to 0$, we obtain on $\tilde{B}\setminus \{ 0 \})$,
\begin{equation}\label{Heb}
 \Big(\tr{\omega_0}{\omega}\Big)^{1/A} |\sigma_1|_{h_1} \left( \prod_{i=2}^M |\sigma_i|_{h_i}^{4\lambda} \right) \tr{\oeuc}{\omega}  \le C.
\end{equation}
 First, using (\ref{simplebd}) we have $|\sigma_1|_{h_1} \tr{\oeuc}{\omega} \le C \tr{\omega_0}{\omega}$ and hence
 $$\tr{\oeuc}{\omega} \le \frac{C}{r^2} \left( \prod_{i=2}^M |\sigma_i|_{h_i}^{4\lambda} \right)^{-A/(A+1)}.$$
where we have used $|\sigma_1|_{h_1}=r^2$ from (\ref{r4}).  We then obtain the estimate (i) since the quantity $\displaystyle{\Big(\prod_{i=2}^M |\sigma_i|^2_{h_i} \Big)^{-1}}$ is uniformly bounded on $X \setminus U^1_{\eta}$, depending only on the fixed data and $\eta$.
 
For (ii), observe that from (\ref{Heb}) we also have
$$\Big(\tr{\omega_0 + \oeuc}{\omega}\Big)^{1+1/A}\le C |\sigma_1|^{-1}_{h_1} \Big(\prod_{i=2}^M |\sigma_i|^{4\lambda}_{h_i} \Big)^{-1},$$
and so using (\ref{r4}) again,
 $$\tr{\omega_0 + \oeuc} \omega \le C r^{-2(1-\delta)} \Big(\prod_{i=2}^M |\sigma_i|^{4\lambda}_{h_i} \Big)^{- \frac{A}{A+1}},$$
for some $\delta\in (0,1)$ depending only on $A$.  This completes the proof of (ii).

For (iii), we will apply the maximum principle to $G_{\ve}$, for $\ve>0$, defined by 
$$G_{\ve} = \log \left( | V|_{\omega}^{1+\ve} \left( \prod_{i=2}^M | \sigma_i|^{4\lambda}_{h_i} \right) \tr{\oeuc}{\omega} \right) + A(|z^1|^2+|z^2|^2),$$
on $\tilde{B} \setminus \{ 0\}$, for $A>0$ to be determined.  Note that from the bounds (\ref{r4}), (\ref{simplebd}) and (\ref{Wbounds}), we see that for each fixed $t$,  the quantity $G_{\ve}(x)$ tends to $-\infty$ as $x$ approaches zero.  Suppose that $G_{\ve}$ achieves a maximum at a point $(x_0, t_0) \in (\tilde{B} \setminus \{0\}) \times (0,\infty)$ such that $x_0$ does not lie on any of the curves $D_2, \ldots, D_M$.

Following the computation of \cite[Lemma 2.6]{SW2}, we compute at $(x_0, t_0)$,
$$0 \le (\frac{\partial}{\partial t} - \Delta) G_\varepsilon \le   \tr{\omega}{\Big( \sum_{i=2}^M 2\lambda R_{h_i} - A \oeuc \Big)} < 0,$$
if $A$ is chosen sufficiently large.  This is a contradiction, implying that the maximum of $G_{\ve}$ occurs at $t=0$ or on the boundary of $\tilde{B}$, where, by Lemma \ref{lemma 1}, we have the upper bound for $G_{\ve}$.    Thus $G_{\ve}$ is bounded from above on $\tilde{B} \setminus \{0\}$, and moreover the bound is independent of $\ve$.  Letting $\ve \rightarrow 0$, we obtain
\begin{equation*}
|V|_\omega \tr{\oeuc} \omega\le C \Big(\prod_{i = 2}^M |\sigma_i|^{4\lambda}_{h_i} \Big)^{-1}.
\end{equation*}
But on $(X \setminus U^1_{\eta} \cap (\tilde{B} \setminus \{0\})$, we have from (i),
\begin{equation*}
 |V|_\omega \tr{\oeuc} \omega\le C.
\end{equation*}
On the other hand, $$|V|_\omega^2\le (\tr{\oeuc}{\omega}) |V|_{\oeuc}^2 = r^2 \tr{\oeuc}{\omega},$$
and multiplying both sides by $|V|_\omega$ gives
 $|V|_\omega^2\le C r^{4/3}$ on $(X \setminus U^1_{\eta} \cap (\tilde{B} \setminus \{0\})$.  This completes the proof of the lemma.
 \end{proof}

As an immediate consequence of these estimates, we can obtain the following bound on the evolving metric $\omega=\omega(t)$ in terms of local holomorphic coordinate systems on $X$.

\begin{corollary} \label{cortolemma}  Fix  $j \in \{1, \ldots, M\}$ and $\eta>0$.  We can cover $D_j \cap (X \setminus U^j_{\eta})$ by finitely many complex coordinate charts $W$ with complex coordinates $u,v$ so that 
\begin{enumerate}
\item[(i)] On each $W$, the curve $D_j$ is given by $D_j= \{ u=0 \}$.
\item[(ii)] There exists a uniform $C$ such that on each $W$,
\begin{equation} \label{goode}
\omega(t) \le C \left( \frac{ \sqrt{-1} du \wedge d\ov{u}}{|u|^{4/3}} + \sqrt{-1} dv\wedge d\ov{v} \right).
\end{equation}
\end{enumerate}
\end{corollary}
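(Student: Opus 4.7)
The plan is to pull back the estimates of Lemma~\ref{lemmaestimates} through the coordinate change (\ref{uv}) and then verify the matrix inequality (\ref{goode}) entry by entry. Since $D_j \cap (X \setminus U^j_\eta)$ is compact, I may cover it by finitely many neighborhoods $W$ of points $p \in D_j$ on which the coordinates $(u,v)$ of (\ref{uv}) are defined, shrinking each chart so that $|v|$ is uniformly bounded. On such a $W$ the curve $D_j$ is cut out by $\{u = 0\}$, which gives condition (i), so the content is the pointwise bound (\ref{goode}).

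Two elementary chain-rule calculations starting from $z^1 = u^{1/2}$, $z^2 = u^{1/2}v$ do the bookkeeping. First,
\[
V \;=\; z^1\,\dd{z^1} + z^2\,\dd{z^2} \;=\; 2u\,\dd{u}
\]
in $(u,v)$-coordinates, so $|V|^2_\omega = 4|u|^2\,\omega_{u\bar u}$. Second, $dz^2 = \tfrac{v}{2u^{1/2}}\,du + u^{1/2}\,dv$, which reads off $(\oeuc)_{v\bar v} = |u|$. Finally $r^2 = |z^1|^2+|z^2|^2 = |u|(1+|v|^2)$, so $r^2$ and $|u|$ are comparable on each chart.

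Now I read off the matrix entries of $\omega$ in the frame $(\dd{u}, \dd{v})$. Lemma~\ref{lemmaestimates}(iii) yields
\[
4|u|^2\,\omega_{u\bar u} \;=\; |V|^2_\omega \;\le\; Cr^{4/3} \;\le\; C|u|^{2/3},
\]
hence $\omega_{u\bar u} \le C|u|^{-4/3}$. Lemma~\ref{lemmaestimates}(i) gives $\omega_{v\bar v} \le Cr^{-2}(\oeuc)_{v\bar v} = C|u|/r^2 \le C$. Cauchy--Schwarz for the positive semi-definite Hermitian form $\omega$ then yields $|\omega_{u\bar v}|^2 \le \omega_{u\bar u}\,\omega_{v\bar v} \le C|u|^{-4/3}$.

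A short linear-algebra check shows that a Hermitian $(1,1)$-form with entries satisfying the three displayed bounds is dominated, after enlarging $C$, by the right-hand side of (\ref{goode}): writing the difference $C'(\sqrt{-1}\,du\wedge d\bar u/|u|^{4/3} + \sqrt{-1}\,dv\wedge d\bar v) - \omega$ as a $2\times 2$ Hermitian matrix, the diagonal entries are of order $|u|^{-4/3}$ and $1$, the off-diagonal modulus is of order $|u|^{-2/3}$, so its determinant is nonnegative once $C'$ is large enough. There is no real obstacle; the only observation worth flagging is that the crude bound Lemma~\ref{lemmaestimates}(i) is enough for the $v\bar v$ entry \emph{only} because of the extra factor $|u|$ in $(\oeuc)_{v\bar v}$, whereas the refined estimate Lemma~\ref{lemmaestimates}(iii) is exactly what is needed to produce the $|u|^{-4/3}$ blow-up rate on the $u\bar u$ entry.
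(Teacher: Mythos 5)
Your proof is correct and follows essentially the same route as the paper: cover by charts with $z^1=u^{1/2}$, $z^2=u^{1/2}v$, use Lemma~\ref{lemmaestimates}(i) for the $v\bar v$ entry (via $(\oeuc)_{v\bar v}=|u|$) and Lemma~\ref{lemmaestimates}(iii) for the $u\bar u$ entry (via $V=2u\,\dd{u}$). The only cosmetic difference is that you read off matrix entries and invoke Cauchy--Schwarz plus an explicit $2\times 2$ positivity check for the off-diagonal, whereas the paper first records the cruder full matrix bound $\omega\le C(\sqrt{-1}\,du\wedge d\bar u/|u|^2+\sqrt{-1}\,dv\wedge d\bar v)$ from (i) and then improves the $u\bar u$ component, leaving the (routine) recombination implicit; your version makes that final step explicit, which is a small but genuine clarification.
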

\begin{proof}
By compactness, it suffices to prove the result for some neighborhood of a fixed point $p$ in  $D_j \cap (X \setminus U^j_{\eta})$.  By the discussion above the statement of Lemma \ref{lemmaestimates}, we can find coordinates $u,v$ in a neighborhood $W$ of $p$ such that $W \cap D_j = \{u=0\}$ and $u,v$ satisfy (\ref{uv}).  Compute on $W\backslash D_j$
\[
\begin{split}
\sqrt{-1} (dz^1 \wedge d\ov{z}^1 + dz^2 \wedge d\ov{z}^2) = {} & \frac{(1+|v|^2)\sqrt{-1} du \wedge d\ov{u}}{4|u|} + \sqrt{-1} |u| dv\wedge d\ov{v} \\
& + \textrm{Re} \left( \sqrt{-1} \left( \frac{\ov{u}}{u} \right)^{1/2} v du \wedge d\ov{v} \right) \\
\le {} & C \left( \frac{ \sqrt{-1} du \wedge d\ov{u}}{|u|} +  \sqrt{-1} |u| dv\wedge d\ov{v}\right),
\end{split}
\]
if we assume that $v$ is sufficiently small.  Moreover, we have \begin{equation} \label{r2}
r^2 = |z^1|^2+|z^2|^2= |u|(1+|v|^2).
\end{equation}
Then the estimate (i) of Lemma \ref{lemmaestimates} implies that on $W$ we have
$$\omega \le C\left( \frac{ \sqrt{-1} du \wedge d\ov{u}}{|u|^2} + \sqrt{-1} dv\wedge d\ov{v} \right),$$
for a uniform constant $C$.

We can improve the bound in the $\partial/\partial u$ direction.  Indeed, note that 
$$u \frac{\partial}{\partial u} =\frac{ z^1}{2} \frac{\partial}{\partial z^1} + \frac{z^2}{2} \frac{\partial}{\partial z^2} =\frac V 2,$$
and hence from part (iii) of Lemma \ref{lemmaestimates} and (\ref{r2}),
$$g\left( \frac{\partial}{\partial u},  \frac{\partial}{\partial \ov{u}} \right) = \frac{1}{4 |u|^2}|V|^2_{\omega} \le \frac{Cr^{4/3}}{|u|^2} \le \frac{C'}{|u|^{4/3}},$$
and this gives the estimate (ii) on $W$, as required.  
\end{proof}

Note that although the estimate (\ref{goode}) blows up as we approach $\{ u= 0 \}$, it is strong enough to show that the diameter of each set $U_{\eta}^j$ is uniformly bounded. However at the moment, we have no estimates in neighborhoods of the  intersection points $p_{ij}=D_i \cap D_j$ for $i\neq j$.   The following lemma shows that our estimates do indeed extend to such neighborhoods.

\begin{lemma}  \label{lemmacon} Fix an intersection point $p_{ij} = D_i \cap D_j$ for $i\neq j$.  There exists a complex coordinate chart $W$ centered at $p_{ij}$ with complex coordinates $u,v$, such that $D_i \cap W = \{ u=0 \}$ and $D_j \cap W = \{ v=0 \}$.  On $W \setminus (D_i \cup D_j)$, we have
\begin{equation} \label{keyestimate}
\omega(t) \le C \left( \frac{\sqrt{-1} du \wedge d\ov{u}}{|u|^{4/3}} +  \frac{\sqrt{-1} dv \wedge d\ov{v}}{|v|^{4/3}}\right).
\end{equation}
for a uniform constant $C$.
\end{lemma}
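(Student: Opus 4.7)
The plan is to establish (\ref{keyestimate}) by a maximum-principle argument on a small coordinate neighborhood $W$ of $p_{ij}$, mimicking the strategy of Lemma~\ref{lemmaestimates}(iii). Using transversality of $D_i,D_j$, I first choose holomorphic coordinates $(u,v)$ on $W$ with $D_i\cap W=\{u=0\}$ and $D_j\cap W=\{v=0\}$. In these coordinates (\ref{keyestimate}) is equivalent to the pointwise estimates $|u\partial_u|_\omega^2\le C|u|^{2/3}$ and $|v\partial_v|_\omega^2\le C|v|^{2/3}$ on $W\setminus(D_i\cup D_j)$, the off-diagonal term being controlled by Cauchy--Schwarz. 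Since $u\partial_u$ and $v\partial_v$ are holomorphic vector fields on $W$ vanishing on $D_i$ and $D_j$ respectively, both log-norms are amenable to the parabolic maximum-principle argument of Lemma~\ref{lemmaestimates}(iii).

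Concretely, I would apply the parabolic maximum principle on $W\setminus(D_i\cup D_j)$ to a test function of the form
\[
G_\varepsilon \;=\; \log\!\Bigl(|u\partial_u|_\omega^{1+\varepsilon}|v\partial_v|_\omega^{1+\varepsilon}\Bigl(\prod_{k\neq i,j}|\sigma_k|^{4\lambda}_{h_k}\Bigr)\tr{\oeuc}{\omega}\Bigr)+A(|u|^2+|v|^2),
\]
where $\oeuc=\sqrt{-1}du\wedge d\bar u+\sqrt{-1}dv\wedge d\bar v$ is the local Euclidean metric and $A>0$ is large. On $\partial W$, which after shrinking $W$ lies uniformly away from $p_{ij}$ and from all other $(-2)$-curves, Corollary~\ref{cortolemma} applied separately to $D_i$ and $D_j$ bounds $|u\partial_u|_\omega$, $|v\partial_v|_\omega$, and $\tr{\oeuc}{\omega}$, producing an upper bound for $G_\varepsilon$. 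The factor $\prod_{k\neq i,j}|\sigma_k|^{4\lambda}$ drives $G_\varepsilon\to-\infty$ on any other $(-2)$-curve meeting $W$. The computation of $(\partial_t-\Delta)G_\varepsilon$ at an interior maximum parallels Lemma~\ref{lemmaestimates}(iii) exactly, and for $A$ fixed sufficiently large the right-hand side is bounded by $\tr{\omega}{(\sum_{k\neq i,j}2\lambda R_{h_k}-A\oeuc)}<0$ via (\ref{chime}), ruling out interior maxima. Letting $\varepsilon\to 0$ and using $|u\partial_u|_\omega^2\le|u|^2\tr{\oeuc}{\omega}$ (and the analog for $v\partial_v$) then yields the claimed pointwise bounds.

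The principal technical difficulty is to ensure that $G_\varepsilon\to-\infty$ as one approaches $D_i\cup D_j$ inside $W$, needed to rule out the maximum on these curves. The naive a priori bound from Lemma~\ref{lemma 1}(ii) gives $|u\partial_u|_\omega^2\le C|\sigma|^{-2\lambda}|u\partial_u|_{\omega_0}^2\le C|u|^{2(1-\lambda)}|v|^{-2\lambda}$ near $p_{ij}$, which vanishes on $D_i$ only when $\lambda<1$. To handle general $\lambda$, I would proceed by a bootstrap: first apply the argument with the modified test function $G_\varepsilon+\mu\log\bigl(|\sigma_i|^2_{h_i}|\sigma_j|^2_{h_j}\bigr)$ for $\mu>0$ large enough that Lemma~\ref{lemma 1}(ii) alone forces the modified quantity to $-\infty$ on $D_i\cup D_j$; this yields a weaker uniform bound on $|u\partial_u|_\omega,|v\partial_v|_\omega$ inside $W$, which can then be iterated (shrinking $\mu\to 0$) to remove the auxiliary factor and recover the sharp exponent $4/3$.
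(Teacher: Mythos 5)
Your reduction of (\ref{keyestimate}) to the two diagonal bounds $|u\partial_u|_\omega^2\le C|u|^{2/3}$ and $|v\partial_v|_\omega^2\le C|v|^{2/3}$ is fine, and the idea of running a Song--Weinkove type maximum principle is in the right spirit. However, the specific test function you propose does not work, for two concrete reasons.

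First, the boundary bound fails. Your $G_\varepsilon$ contains $\tr{\oeuc}{\omega}$ where $\oeuc=\sqrt{-1}du\wedge d\bar u+\sqrt{-1}dv\wedge d\bar v$ is the \emph{smooth} Euclidean metric in the chart $W$. But $\partial W$ is not disjoint from $D_i\cup D_j$: the boundary sphere meets both curves. Near $\partial W\cap D_i$, Corollary~\ref{cortolemma} gives precisely $g_{u\bar u}\le C|u|^{-4/3}$, which is sharp, so $\tr{\oeuc}{\omega}\sim |u|^{-4/3}$ is unbounded there. Meanwhile $|u\partial_u|_\omega^{1+\varepsilon}\sim |u|^{(1+\varepsilon)/3}$ only partially compensates, and $|v\partial_v|_\omega$ and $\prod_{k\ne i,j}|\sigma_k|^{4\lambda}$ stay bounded. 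The net effect is $G_\varepsilon\to+\infty$ as $|u|\to0$ along $\partial W$, so you cannot obtain an a priori upper bound on $\partial W$, and the maximum principle gives you nothing. The same happens near $\partial W\cap D_j$.

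Second, even granting the conclusion, the claimed bound $|u\partial_u|_\omega\,|v\partial_v|_\omega\,\tr{\oeuc}{\omega}\le C$ is actually \emph{false} for the expected limit. Testing it against the conical model $\omega_{\mathrm{con}}=|u|^{-4/3}\sqrt{-1}du\wedge d\bar u+|v|^{-4/3}\sqrt{-1}dv\wedge d\bar v$ (which the flow metrics are expected to resemble near $p_{ij}$), one gets $|u\partial_u|\,|v\partial_v|\,\tr{\oeuc}{}\sim |u|^{1/3}|v|^{1/3}(|u|^{-4/3}+|v|^{-4/3})$, which blows up as $(u,v)\to(0,0)$. So no amount of fixing the maximum principle computation can rescue the proposed conclusion: the wrong quantity was chosen to bound. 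Correspondingly, the algebra at the end also does not close --- from $PQT\le C$, $P^2\le|u|^2 T$, $Q^2\le|v|^2 T$ one can only deduce coupled inequalities like $P^3Q\le C|u|^2$, which leave $P$ uncontrolled when $Q$ is small (i.e.\ near $D_j$).

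The paper's proof sidesteps all of this by applying the maximum principle not to the Euclidean trace but to $G=\log\tr{\ocon}{\omega}$, where $\ocon = |u|^{-4/3}\sqrt{-1}du\wedge d\bar u+|v|^{-4/3}\sqrt{-1}dv\wedge d\bar v$ is the conical metric with cone angle $2\pi/3$ along $D_i\cup D_j$. This choice has two decisive advantages: (1) $\tr{\ocon}{\omega}=|u|^{4/3}g_{u\bar u}+|v|^{4/3}g_{v\bar v}$ is exactly the quantity one wants to bound, and Corollary~\ref{cortolemma} bounds it on $\partial W$; and (2) $\ocon$ is \emph{flat} on $W\setminus(D_i\cup D_j)$ (visible after the substitution $u'=u^{1/3}$, $v'=v^{1/3}$), so the parabolic Schwarz inequality (\ref{di}) with $\beta=\ocon$ has $C_\beta=0$, giving immediately $(\partial_t-\Delta)\log\tr{\ocon}{\omega}\le -1$. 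The maximum is then on $\partial W$ or at $t=0$ (with the standard $\varepsilon$-perturbation $G_\varepsilon=G+\varepsilon\log(\omega^2/\ocon^2)$ to rule out a maximum on $D_i\cup D_j$), and the lemma follows in a few lines. If you want to rework your argument, you should replace $\oeuc$ by $\ocon$ throughout; at that point the auxiliary factors $|u\partial_u|_\omega$, $|v\partial_v|_\omega$ and the bootstrap become unnecessary.
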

\begin{proof}  Since $D_i \cdot D_j=1$ if $i\neq j$ and $D_i$ and $D_j$ meet transversally, we can always find local complex coordinate chart $W$ centered at $p_{ij}$ with complex coordinates $u,v$, such that $D_i \cap W = \{ u=0 \}$ and $D_j \cap W = \{ v=0 \}$.

Write
$$\ocon = \frac{\sqrt{-1} du \wedge d\ov{u}}{|u|^{4/3}} +  \frac{\sqrt{-1} dv \wedge d\ov{v}}{|v|^{4/3}}.$$
Note that $\ocon$ is a metric with conical singularities with simple normal crossings. The cone angle along $(D_i \cup D_j)\setminus \{p_{ij}\}$ is $2\pi/3$ (cf. \cite{D, DGSW}).  A key point is that on $W \setminus (D_i \cup D_j)$, the curvature of $\ocon$ vanishes, as can be readily checked by a direct computation, or alternatively, one can apply change of variables by letting $u' = u^{1/3}$, $v' = v^{1/3}$ locally on a well-defined branch.

We will apply the maximum principle to the quantity $G = \log \tr{\ocon}{\omega}$ on $W \setminus (D_i \cup D_j)$.  The estimates of Corollary \ref{cortolemma} imply that the quantity $G$ is uniformly bounded from above on the boundary of the set $W$.  Applying (\ref{di}) with $\beta = \ocon$ and $C_0=0$ we see that on $W \setminus (D_i \cup D_j)$ we have
$$\left( \frac{\partial}{\partial t} - \Delta \right)  G\le -1,$$
which implies that the maximum of $G$ occurs either at $t=0$ or on the boundary of $W$.  In both cases, we have a uniform upper bound\footnote{To clarify this part of the argument (we thank Valentino Tosatti for pointing out our imprecision here): fix $\ve>0$ and consider $G_{\ve} = \log \tr{\ocon}{\omega} + \ve \log \frac{\omega^2}{\ocon^2}$ which satisfies $( \frac{\partial}{\partial t} - \Delta )  G_{\ve} \le -1$.  Then let $\ve \rightarrow 0$.}  for $G$.  Hence $G \le C$ on $W \setminus (D_i \cup D_j)$, completing the proof of the lemma.
\end{proof}

From the estimates now obtained, we can bound distances with respect to $g(t)$ in a neighborhood of the curve $D$.  Denote by $B_{g_0, \delta}(p)$ the $g_0$ geodesic ball in $X$ centered at $p \in X$ with radius $\delta$, and by $\tilde{U}_{\delta}$ the $\delta$-tubular neighborhood of $D$ with respect to $g_0$,
\begin{equation} \label{tildeU}
\tilde{U}_{\delta}:= \{ x \in X \ | \ d_{g_0} (x, D) < \delta \}.
\end{equation}
Then we have:

\begin{proposition} \label{prop} There exist uniform positive constants $C, \delta_0$ so that the following hold.
\begin{enumerate}
\item[(i)] For every intersection point $p_{ij} = D_i \cap D_j$, and any $\delta\in (0, \delta_0)$, we have
$$\emph{diam}_{g(t)} B_{g_0, \delta}(p_{ij}) \le C \delta^{1/3}.$$
\item[(ii)]  For every $\delta \in (0, \delta_0)$, 
$$d_{g(t)}(D, q) \le C \delta^{1/3}, \quad \textrm{for all } q  \in \tilde{U}_{\delta}.$$
\end{enumerate}
\end{proposition}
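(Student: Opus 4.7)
The plan is to bound both distances by a computation with the model conical metric $\ocon$. The key one-dimensional observation is that the holomorphic map $u\mapsto 3u^{1/3}$ (on any single-valued branch) pulls back $du'$ to $u^{-2/3}\,du$, so the model metric $|u|^{-4/3}\sqrt{-1}\,du\wedge d\bar u$ on the punctured disc $\{0<|u|<R\}$ is locally isometric to Euclidean and its metric completion has diameter $O(R^{1/3})$. Taking products, $\ocon$ on the punctured bidisc $\{0<|u|,|v|<R\}$ has metric completion of diameter $O(R^{1/3})$ as well. Since the estimates \eqref{keyestimate} and \eqref{goode} are uniform in $t$, all bounds obtained this way are automatically uniform in $t$.

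For part (i), I would fix the coordinate chart $W$ around $p_{ij}$ from Lemma \ref{lemmacon}, in which $D_i\cup D_j=\{uv=0\}$ and $\omega(t)\le C\ocon$ by \eqref{keyestimate}. Since $g_0$ is uniformly equivalent to the Euclidean metric in $u,v$ on $W$, for $\delta_0$ small enough $B_{g_0,\delta}(p_{ij})$ is contained in a coordinate bidisc of polydisc radius $O(\delta)$ whenever $\delta<\delta_0$. The $g(t)$-diameter of $B_{g_0,\delta}(p_{ij})\setminus(D_i\cup D_j)$ is then controlled by the $\ocon$-diameter of the bidisc, which is $O(\delta^{1/3})$ by the model computation, and distances to points on $D_i\cup D_j$ are handled by taking limits of nearby off-$D$ points.

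For part (ii), I would choose $\delta_0$ small and split $q\in\tilde{U}_\delta$ into two cases. Case A: if $q$ lies in one of the finitely many balls $B_{g_0,\delta_0}(p_{ij})$ around intersection points, then in the coordinates of Lemma \ref{lemmacon} the point $q=(u_0,v_0)$ satisfies $\min(|u_0|,|v_0|)\le C\delta$ because $d_{g_0}(q,D)<\delta$ and $D\cap W=\{uv=0\}$; a radial path sending the smaller coordinate to $0$ reaches $D$ and, using \eqref{keyestimate} and the one-dimensional model, has $g(t)$-length $O(\delta^{1/3})$. Case B: if $q$ lies outside all those balls but still in $\tilde{U}_\delta$, then $q$ lies near a smooth point of some $D_j$; for a single choice of $\eta$ smaller than $\delta_0$, $q$ lies in a chart of Corollary \ref{cortolemma} in which \eqref{goode} holds, and the analogous radial path along the $u$-coordinate has $g(t)$-length $O(\delta^{1/3})$ by the same one-dimensional calculation.

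I do not expect a serious obstacle beyond ensuring that a single choice of $\delta_0$ and $\eta$ makes the case-split cover all of $\tilde{U}_\delta$ with uniform constants, which follows from compactness of $D$ together with the finite list of intersection points and the finite cover afforded by Corollary \ref{cortolemma} at a fixed $\eta$. The conceptual content is entirely carried by Corollary \ref{cortolemma} and Lemma \ref{lemmacon}; what remains is essentially the one-variable computation that $\int_0^\delta r^{-2/3}\,dr=O(\delta^{1/3})$.
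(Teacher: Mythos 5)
Your proposal is correct and follows essentially the same route as the paper: in the chart around an intersection point you bound $\omega(t)$ by $C\ocon$ via Lemma~\ref{lemmacon}, estimate lengths of radial paths (which amounts to the integral $\int_0^\delta r^{-2/3}\,dr = O(\delta^{1/3})$ you invoke), and away from intersection points you use Corollary~\ref{cortolemma} in the same way. Your reformulation via the conformal change $u\mapsto u^{1/3}$ making the model metric locally Euclidean is just a cleaner way of packaging the same length computation the paper performs explicitly.
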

\begin{proof} For (i), fix an intersection point $p_{ij}$, and let $W$ be as in the statement of Lemma \ref{lemmacon}, with $p_{ij}$ corresponding to the point $u=v=0$ in the complex coordinates $u,v$.  We assume that $\delta_0>0$ is sufficiently small so that $W$ contains the ball $B_{g_0, \delta_0}(p_{ij})$.  Given a point $(u_0, v_0) \in W$ with both $u_0$ and $v_0$ nonzero, we compute using the estimate (\ref{keyestimate}) that the length of the path $\gamma: [0,1] \rightarrow W$ given by $\gamma(\lambda) = (\lambda u_0, \lambda v_0)$, with respect to $g(t)$, satisfies
$$\textrm{Length}_{g(t)}(\gamma) \le C(|u_0|^{1/3} + |v_0|^{1/3}) \le C' \delta^{1/3},$$
for uniform constants $C, C'$.  Since for each $t$, the metric $\omega(t)$ is smooth, we obtain this bound even when one of $u_0$, $v_0$ is zero.   This gives (i).

For (ii), let $q \in \tilde{U}_{\delta}$.  First assume that $q$ is contained in a coordinate chart $W$ as in the discussion above, and $q$ has coordinates $(u_0, v_0)$.  Since $q$ is in $\tilde U_{\delta}$, at least one of $|u_0|$ or $|v_0|$ is of order $\textrm{O}(\delta)$.  Without loss of generality we suppose that $|u_0| = \textrm{O}(\delta)$ and $v_0\neq 0$.
  Then consider the path $\gamma: [0,1] \rightarrow W$ given by $\gamma(\lambda) = (\lambda u_0, v_0)$.  The length of this path with respect to $g(t)$ satisfies
$$\textrm{Length}_{g(t)}(\gamma) \le C|u_0|^{1/3} \le C' \delta^{1/3},$$
since when restricted to the $\partial/\partial u$ and $\partial/\partial \ov{u}$ directions, the metric $\omega(t)$ is bounded from above by $C\sqrt{-1}du \wedge d\ov{u}/|u|^{4/3}$. 

If $q \in \tilde{U}_{\delta}$ is not contained in such a neighborhood $W$ of an intersection point, we instead apply the estimate of Corollary \ref{cortolemma}, and we obtain the required bound by an identical argument.  This completes the proof of (ii).
\end{proof}

\section{Gromov-Hausdorff convergence}

Given the estimates above, it is now straightforward to derive the Gromov-Hausdorff convergence of $(X,g(t))$ to $(X_{\textrm{can}}, g_{\textrm{KE}})$.  First, we have
\begin{equation} \label{intestD}
\int_D \omega(t) = e^{-t} \int_D \omega_0 \longrightarrow 0, \quad \textrm{as } t \rightarrow \infty,
\end{equation}
where $D = \sum_i D_i$ is the $(-2)$ curve as introduced above.  Indeed, this follows immediately from the K\"ahler-Ricci flow equation since
$$\ddt{} \int_D \omega(t) = - \int_D \textrm{Ric}(\omega(t)) - \int_D \omega(t) = -\int_D \omega(t),$$
where we have used the fact that $K_X \cdot D=0$.

First we prove:

\begin{lemma} \label{D} If $\hat{D}$ is a connected component of $D$ then 
$$\emph{diam}_{g(t)} \hat{D} \rightarrow 0, \quad \textrm{as } t \rightarrow \infty.$$
\end{lemma}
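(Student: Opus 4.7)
The plan is to reduce the lemma to showing $\text{diam}_{g(t)}(D_j)\to 0$ for each irreducible component $D_j\subseteq\hat D$. Since $\hat D$ is a finite, connected union of smooth rational curves meeting transversally at isolated points, any two points of $\hat D$ may be joined by a chain through finitely many such components, so $\text{diam}_{g(t)}(\hat D)\le K\max_j\text{diam}_{g(t)}(D_j)$ with $K$ depending only on the combinatorial type of $\hat D$.

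Fix $D_j\cong\mathbb{P}^1$, and let $P_1,\ldots,P_m\in D_j$ be the intersection points with the other components of $D$. Three ingredients are in hand:
\begin{enumerate}
\item[(a)] The total area shrinks: $V_t:=\int_{D_j}\omega(t)=e^{-t}\int_{D_j}\omega_0\to 0$, by the same K\"ahler-Ricci flow calculation as in (\ref{intestD});
\item[(b)] For any fixed $\delta>0$, Corollary \ref{cortolemma} furnishes a uniform upper bound $\omega(t)|_{D_j}\le C_\delta\,\omega_0|_{D_j}$ on $D_j\setminus\bigcup_k B_{g_0,\delta}(P_k)$;
\item[(c)] Proposition \ref{prop}(i) bounds the $g(t)$-diameter of $B_{g_0,\delta}(P_k)$ by $C\delta^{1/3}$.
\end{enumerate}

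The main step is: for any $p,q\in D_j$ lying outside the $\delta$-balls around the $P_k$, show $d_{g(t)}(p,q)\to 0$. I would identify $D_j$ conformally with $\mathbb{C}\cup\{\infty\}$ so that $p=0$ and $q=\infty$, take $\omega_{\mathrm{FS}}$ as reference, and write $\omega(t)|_{D_j}=F_t\,\omega_{\mathrm{FS}}$. The radial rays $\gamma_\theta(r)=re^{i\theta}$, $\theta\in[0,2\pi)$, sweep out $D_j$ once, and the $g(t)$-length of $\gamma_\theta$ equals $\int_0^\infty\sqrt{F_t(r,\theta)}\,dr/(1+r^2)$. Splitting each ray into an endcap portion $\{r<\delta'\}\cup\{r>1/\delta'\}$ and a bulk portion, Cauchy--Schwarz against the Euclidean area form yields
\[
\frac{1}{2\pi}\int_0^{2\pi}\!\int_{\delta'\le r\le 1/\delta'}\!\sqrt{F_t}\,\frac{dr\,d\theta}{1+r^2}\;\le\;C\sqrt{V_t}\,\Big(\int_{\delta'}^{1/\delta'}\tfrac{dr}{r}\Big)^{1/2}\;=\;C\sqrt{V_t\,\log(1/\delta')}
\]
for the $\theta$-average of the bulk length, while the endcap contribution is bounded by $C\delta'$ using (b). Choosing $\delta'$ as a suitable power of $V_t$ (say $\delta'=V_t^{1/3}$) makes both terms go to zero, and the minimum over $\theta$ is bounded by the average. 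Combined with the $O(\delta^{1/3})$ bound from (c) for detours through $B_{g_0,\delta}(P_k)$, and summing over the finitely many $D_j\subseteq\hat D$, this gives $\text{diam}_{g(t)}(\hat D)\to 0$.

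The main technical obstacle is that the upper bound (b) degenerates near each intersection point $P_k$, where $\omega(t)|_{D_j}$ may blow up because of the conical structure at the local $A$-$D$-$E$ singularity; Proposition \ref{prop}(i) is designed precisely to handle this and is what lets us ``cut out'' the $\delta$-balls without losing diameter control. The secondary degeneration of the radial sweepout at its poles $p$ and $q$ is addressed by the endcap argument above.
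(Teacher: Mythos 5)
Your proposal is correct and follows essentially the same route as the paper: both decompose $\hat D$ into irreducible components, bound the diameter of each component away from the intersection points using a shrinking-area sweepout argument over $\mathbb{P}^1$ together with the uniform bound $\omega(t)|_{D_j} \le C_\delta\,\omega_0|_{D_j}$, and handle small neighborhoods of the intersection points via Proposition \ref{prop}(i). The only cosmetic differences are that the paper cites the sweepout estimate as a black box ([SSW, Lemma 2.4] and [SW2]) and obtains the restricted metric bound from Lemma \ref{lemmaestimates}(ii) plus [SW4, Lemma 5.1], whereas you reproduce the sweepout explicitly and derive the restricted bound from Corollary \ref{cortolemma}; both routes give the same conclusion. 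One minor point you could have flagged, though it does not affect correctness: when placing $p=0$, $q=\infty$ by a M\"obius map, the pointwise constant relating $\omega_0|_{D_j}$ to $\omega_{\mathrm{FS}}$ in the new chart depends on $p,q$ in the interior of the annulus, but on the two endcap disks the Jacobian factor $(M^{-1})^*\omega_{\mathrm{FS}}/\omega_{\mathrm{FS}}$ is uniformly bounded above (it is $O(|p_0-q_0|^2)$ there), so the endcap bound $C\delta'$ is indeed uniform in $p,q$; the bulk term only uses the M\"obius-invariant area $V_t$, so no issue arises.
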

\begin{proof}
For simplicity, assume that $\hat{D}= D_1 \cup D_2$, and that $D_1$ and $D_2$ intersect at a point $p$ (the general case is similar).
Fix $\ve>0$.  Then by Proposition \ref{prop} there exists $\delta>0$ such that the $g_0$ geodesic ball $B_{g_0,\delta}(p)$ of radius $\delta$ centered at $p$ has
\begin{equation} \label{ve1}
\textrm{diam}_{g(t)} B_{g_0, \delta}(p) < \ve/3.
\end{equation}
Define $D_1^{(\ve)} = D_1 \setminus B_{g_0, \delta}(p)$.
The estimate of Lemma \ref{lemmaestimates}, part (ii), together with \cite[Lemma 5.1]{SW4} gives the existence of a constant $C_{\ve}$, depending on $\ve$, such that
$$\omega(t)|_{D_1^{(\ve)}} \le C_{\ve} \omega_0|_{D_1^{(\ve)}}.$$
We claim that if $a, b$ are any two points in $D_1^{(\ve)}$, then 
$$d_{g(t)} (a,b) \le C'_{\ve} e^{-t/3},$$
for another constant $C'_{\ve}$ depending on $\ve$.  The claim follows almost verbatim from the argument in \cite{SW2} or \cite[Lemma 2.4]{SSW} and the estimate (\ref{intestD}).  Indeed, the only differences are that we have to keep track of the dependence on $\ve$, and we replace the factor $(T-t)$ in \cite[Lemma 2.4]{SSW} with $e^{-t}$.

The proof is now essentially finished.   Choose $t$ sufficiently large so that $C'_{\ve} e^{-t/3}< \ve/3$, so that
\begin{equation}\label{ve2}
\textrm{diam}_{g(t)} D_1^{(\ve)} \le \ve/3.
\end{equation}
Similarly, if we set $D_2^{(\ve)} = D_2 \setminus B_{g_0, \delta}(p)$ then we obtain
\begin{equation} \label{ve3}
\textrm{diam}_{g(t)} D_2^{(\ve)} \le \ve/3.
\end{equation}
The result then follows by combining (\ref{ve1}), (\ref{ve2}) and (\ref{ve3}).
\end{proof}

Combining this lemma with the estimates established in Proposition \ref{prop}, we immediately obtain:

\begin{lemma} \label{lemmatubular} Let $\hat{D}$ be a connected component of $D$. For $\delta>0$, write
$$\hat{U}_{\delta}: = \{ x \in X \ | \ d_{g_0}(x, \hat{D}) < \delta \},$$
for its $\delta$-tubular neighborhood with respect to $g_0$.  Then 
for any $\ve>0$ there exists $\delta>0$ and $T>0$ such that 
\begin{equation}\label{diamtU}
\emph{diam}_{g(t)} \hat{U}_{\delta} \le \ve, \quad \textrm{for } t\ge T.
\end{equation}

As a consequence, the diameter of $(X, g(t))$ is uniformly bounded.
\end{lemma}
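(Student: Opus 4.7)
The plan is to combine Lemma 3.1, which already controls the $g(t)$-diameter of each connected component of $D$, with the tubular distance estimate of Proposition 2.1(ii), to bound the $g(t)$-diameter of $\hat{U}_\delta$. Then I will upgrade this to a global diameter bound using the local smooth convergence away from $D$ provided by Lemma 2.1(iii).

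Given $\varepsilon>0$, I would first apply Lemma 3.1 to choose $T>0$ with $\textrm{diam}_{g(t)} \hat{D} < \varepsilon/3$ for $t\ge T$. Next I would use Proposition 2.1(ii) to pick $\delta\in(0,\delta_0)$ small enough that $C\delta^{1/3}<\varepsilon/3$, and moreover small enough that the $g_0$-tubular neighborhoods $\hat{U}_\delta^{(i)}$ of distinct connected components of $D$ are pairwise disjoint (possible by compactness of the finitely many components). For any $p\in \hat{U}_\delta$, the short $g(t)$-path from $p$ to $D$ constructed in the proof of Proposition 2.1(ii) travels only a small Euclidean displacement from its starting point, so its endpoint $p'\in D$ must lie on $\hat{D}$ (this is where the separation of components matters). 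Similarly produce $q'\in\hat{D}$ for any $q\in\hat{U}_\delta$. The triangle inequality
\[
d_{g(t)}(p,q)\le d_{g(t)}(p,p')+\textrm{diam}_{g(t)}\hat{D}+d_{g(t)}(q',q)<\varepsilon
\]
then yields $\textrm{diam}_{g(t)}\hat{U}_\delta\le\varepsilon$ for $t\ge T$.

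For the global diameter statement, I would apply the first part to each of the finitely many connected components $\hat{D}_1,\dots,\hat{D}_m$ of $D$, obtaining nested $g_0$-tubular neighborhoods $\hat{U}_{\delta_i/2}^{(i)}\subset\hat{U}_{\delta_i}^{(i)}$ whose larger members satisfy $\textrm{diam}_{g(t)}\hat{U}_{\delta_i}^{(i)}\le 1$ for $t\ge T_0:=\max_i T_i$. On the compact complement $K:=X\setminus\bigcup_i\hat{U}_{\delta_i/2}^{(i)}\subset X\setminus D$, Lemma 2.1(iii) gives uniform $C^\ell$ bounds for $\varphi$, hence a uniform two-sided bound $C^{-1}\omega_0\le \omega(t)\le C\omega_0$; in particular $\textrm{diam}_{g(t)} K$ is uniformly bounded. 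Any point of $X$ lies within $g(t)$-distance $1$ of $K$, so a triangle inequality on connecting paths gives a uniform bound on $\textrm{diam}_{g(t)} X$ for $t\ge T_0$. For $t\in[0,T_0]$ the diameter is bounded by continuity, since $g(t)$ is a smooth family of Riemannian metrics on the compact manifold $X$.

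The main subtlety, and the one step I would be most careful about, is the claim that the short path from $q\in\hat{U}_\delta$ to $D$ produced by Proposition 2.1(ii) lands on the specific component $\hat{D}$ rather than on some other $D_\ell$; this is resolved by choosing $\delta$ smaller than, say, half the minimum $g_0$-separation between distinct components of $D$ (so that both the path and its endpoint remain in a neighborhood containing only $\hat{D}$). The remaining technicality, of upgrading from smooth interior convergence to a uniform $C^0$ bound on a compact set $K\Subset X\setminus D$, is standard once Lemma 2.1(iii) is in hand.
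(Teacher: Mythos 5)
Your proof is correct and follows essentially the same strategy as the paper: combine Proposition 2.1(ii) with Lemma 3.1 for the tubular diameter bound, then use the uniform interior estimates of Lemma 2.1 on the compact complement of the tubular neighborhoods (plus continuity on $[0,T_0]$) to get the global diameter bound. Your extra care about the short path from $q\in\hat U_\delta$ landing on $\hat D$ rather than some other component is a reasonable precaution that the paper leaves implicit; it is handled exactly as you say, by taking $\delta$ smaller than the $g_0$-separation of distinct components so that the chart in which the radial path of Proposition 2.1(ii) is constructed only meets $\hat D$.
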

\begin{proof} Combine part (ii) of Proposition \ref{prop} with Lemma \ref{D} to obtain (\ref{diamtU}).  For the last assertion, fix $\delta>0$ and use Lemma \ref{lemma 1} to see that the diameter of 
$X \setminus V_{\delta}$ is uniformly bounded, where $V_{\delta}$ is the union of the $\delta$-tubular neighborhoods $\hat{U}_{\delta}$ of all the  connected components of $D$.
\end{proof}

Finally, we complete the proof of our main result.

\begin{proof}[Proof of Theorem \ref{mainthm}] By the results in \cite{Ts} and \cite{TZo}, the solution $\varphi(t)$ of the Monge-Amp\`ere flow (\ref{eqn:MAKRF}) converges smoothly on $X\setminus D$ to a bounded $\chi$-psh function $\varphi_{\textrm{KE}}$ satisfying 
$$(\chi+\ddbar \varphi_{\textrm{KE}})^2 = e^{\varphi_{\textrm{KE}}}\Omega$$
as $t\rightarrow \infty$ and $\varphi_{\textrm{KE}} \in C^\infty(X\setminus D)$.  Since $\chi$ vanishes on $D$ and $\varphi_{\textrm{KE}}\in L^\infty(X)\cap \textrm{PSH}(X, \chi)$, $\varphi_{\textrm{KE}}$ is constant along each connected component of $D$. Immediately $ \chi+ \ddbar \varphi_{\textrm{KE}} = \Phi^* \omega_{\textrm{KE}}$ for some  K\"ahler current   $\omega_{\textrm{KE}} \in -c_1(\Xc)$ on $\Xc$ with bounded local potentials. We then can apply the smoothing properties of the weak K\"ahler-Ricci flow \cite[Theorem 4.5]{SoT2} on K\"ahler orbifolds to  show that $\omega_{\textrm{KE}}'$ must  be a smooth orbifold K\"ahler-Einstein metric on $\Xc$. In particular, $\omega_{\textrm{KE}} = \omega_{\textrm{KE}}'$ by the uniqueness of orbifold K\"ahler-Einstein metrics on K\"ahler orbifolds with ample canonical bundle. 

From the local smooth convergence of  $\varphi$ to $\varphi_{\textrm{KE}}$ on $X\setminus D$,  we have
$$\omega(t) \rightarrow \omega_{\textrm{KE}}, \quad \textrm{as } t \rightarrow \infty,$$
in $C^{\infty}(K)$ for any compact subsets $K \subset X \setminus D$.  Combining this fact with Lemma \ref{lemmatubular} and the argument of \cite[Section 3]{SW2}, 
we immediately obtain the Gromov-Hausdorff convergence of $(X, \omega(t))$ to $(X_{\textrm{can}}, \omega_{\textrm{KE}})$.
\end{proof}

\end{document}